\documentclass[12pt]{iopart}
\usepackage{amssymb,amsthm,amscd,enumerate}

\newcommand{\dd}{\ensuremath{\mathrm{d}}}
\newcommand{\ii}{\ensuremath{\mathrm{i}}}
\renewcommand{\Im}{\ensuremath{\mathsf{Im}\,}}
\renewcommand{\Re}{\ensuremath{\mathsf{Re}\,}}

\newtheorem{theorem}{Theorem}
\newtheorem{lemma}{Lemma}
\newtheorem{corollary}{Corollary}
\theoremstyle{definition}
\newtheorem{remark}{Remark}

\begin{document}
\title{The spectral Dirichlet-Neumann map for Laplace's equation in a convex polygon}

\author{Anthony Ashton}
\address{DAMTP,\\ University of Cambridge.}
\ead{a.c.l.ashton@damtp.cam.ac.uk}
\ams{35J25, 30H99, 45Q05}

\begin{abstract} 
We provide a new approach to studying the Dirichlet-Neumann map for Laplace's equation on a convex polygon using Fokas' unified method for boundary value problems. By exploiting the complex analytic structure inherent in the unified method, we provide new proofs of classical results using mainly complex analytic techniques. The analysis takes place in a Banach space of complex valued, analytic functions and the methodology is based on classical results from complex analysis. Our approach gives way to new numerical treatments of the underlying boundary value problem and the associated Dirichlet-Neumann map. Using these new results we provide a family of well-posed weak problems associated with the Dirichlet-Neumann map, and prove relevant coercivity estimates so that standard techniques can be applied.
\end{abstract}

\maketitle

\section{Introduction}
In the last twelve years there has been a rapid development of the so-called Fokas method for boundary value problems \cite{fokas2008unified}. This method was initially developed for the study of boundary value problems associated with integrable nonlinear PDEs \cite{fokas2002integrable,fokas2005nonlinear,fokas2005generalized}. Remarkably, the method has proved an extremely powerful tool in the study of linear boundary value problems \cite{fokas2001two,fokas2004boundary} and has offered new avenues of pursuit in the numerical study of such problems \cite{fornberg2011numerical,fulton2004analytical,sifalakis2008generalized,smitheman2010spectral}.

The Fokas method can be informally considered as the Fourier analogue of the classical boundary integral methods \cite{hsiao2008boundary,mclean2000strongly}. In the latter case, the analysis is done in the ``physical" space -- i.e. the space associated with the domain on which the boundary value problem is posed. In the Fokas approach the analysis is done in ``spectral" space and the classical boundary integral equations are replaced with the \emph{global relation}. Like the usual boundary integral equations from potential theory, the global relation gives a relationship between the known boundary data and the unknown boundary values for a given PDE. However, the form of the integral equation is very different: the global relation has meromorphic dependence on a spectral parameter which plays the analogue of the wave number in Fourier analysis. The Dirichlet and Neumann terms that arise in boundary integral equations have spectral analogues that arise in the global relation. The fact that the spectral boundary data has meromorphic dependence on the spectral parameter allows for use of powerful tools from complex analysis and this salient feature is one of the core reasons behind the success of the Fokas method.

Another major ingredient in the Fokas approach is the use of novel integral representations. By formulating and solving Riemann-Hilbert problems related to the underlying boundary value problem, Fokas has developed a means of representing the solution to many important problems. These integral representations are intimately related to the classical \emph{fundamental principle} of Ehrenpreis and Palamodov \cite{ehrenpreis1970fourier,palamodov1970linear}. A direct consequence of this highly abstract result is that any solution to a constant coefficient PDE on a convex domain can be written as a superposition of exponential solutions. More concretely, if $P=P(-\ii \partial/\partial x_1, \ldots , -\ii \partial/\partial x_N)$ is a constant coefficient differential operator and $Pu=0$ in a convex domain in $\mathbf{R}^N$, then the fundamental principle states that $u$ can be represented in the form
\[ u(x) =  \int_{Z_P} c(x,\lambda) e^{\ii \lambda \cdot x} \dd \mu(\lambda) \]
where $Z_P = \left\{ \lambda \in \mathbf{C}^N : P( \lambda) =0 \right\}$ and $c(x,\cdot)$ is a polynomial in $x$ such that $c(x,\lambda) e^{\ii \lambda \cdot x}$ for $\lambda \in Z_P$ solves $Pu=0$. This theorem is highly abstract and non-constructive. The result states that \emph{there exists} a measure $\dd \mu$ supported on $Z_P$ and a collection of exponential solutions $c(\lambda,x)e^{\ii \lambda \cdot x}$ such that the representation holds. Fokas' novel integral representations are realisations of this abstract result -- providing an explicit expression that is precisely a superposition of exponential solutions. These integral representations must, of course, contain terms relating to the known boundary data. The novelty lies in the fact that these terms arise in the same form as they do in the global relation. This means there should be no need to go back and fourth between physical space and spectral space -- it is sufficient to understand the global relation in spectral space and give the solution in terms of integrals of spectral functions by means of the novel integral representation.

The implementation of the Fokas method has been largely formal in nature. One usually works on the assumption that a solution to the underlying problem exists and aims to construct it by analysing the global relation.  This assumption can then be checked \emph{a posteriori}. Recently more rigorous results have been obtained \cite{ashton2011hypoelliptic,ashton2012rigorous,ashton2012distributions}. In this paper we continue to address rigorous aspects of the Fokas approach.

We concern ourselves with the particular case of Laplace's equation in a convex polygon. For Dirichlet data with square integrable tangential derivatives along the edges of $\Omega$, we prove the following results (more precise versions of which can be found in Theorems \ref{dncts}, \ref{laxmilgram} and their corollaries)
\begin{enumerate}[(I)]
\item The global relation defines a continuous linear map from the spectral Dirichlet Data to the spectral Neumann data.
\item The resulting physical Neumann data is square integrable along each edge.
\item The global relation gives rise to an infinite family of well-posed weak problems that are easily approximated using standard Galerkin techniques.
\end{enumerate}
As a corollary to statements $(\mathrm{I})$--$(\mathrm{III})$ we get a new proof of existence for the classical Dirichlet problem in a convex polygon for boundary data in $H^1(\partial\Omega)$ \cite{verchota1984layer}. The statement in $(\mathrm{II})$ is in accordance with the well known regularity of the Steklov-Poincare operator on Lipschitz domains \cite{mclean2000strongly}. In proving $(\mathrm{III})$ we provide a basis from which the previous numerical studies \cite{fornberg2011numerical,fulton2004analytical,sifalakis2008generalized,smitheman2010spectral} can be made rigorous. The methods presented here can easily be adapted to deal with the Neumann boundary value problem. In this case similar statements $(\mathrm{I})$--$(\mathrm{III})$ hold, but with the Dirichlet data being determined modulo constants.

The approach we use is far removed from the classical methods boundary integral and more modern pseudodifferential methods. The analysis takes place Banach space of complex analytic functions and the main ingredients in our proofs are complex analytic in nature, utilising the classical theorems of Liouville, Montel, Phragm\'{e}n and Lindel\"{o}f.

\section{The Global Relation for Convex Polygons}
We work on a polygon $\Omega \subset \mathbf{R}^2 \simeq \mathbf{C}$ with vertices $\{z_i\}_{i=1}^n$ and sides $\Gamma_i = (z_i,z_{i+1})$ with $z_{n+1}=z_1$. We write $\alpha_i = \arg(z_{i+1}-z_i)$ for the angle the side $\Gamma_i$ makes with the positive real axis and $\Delta_{ij}=\alpha_i-\alpha_j$. Also set $2\sigma_i=|\Gamma_i|$ for the length of the side $\Gamma_i$.

We are given real valued boundary data $f_i \in H^1(\Gamma_i)$ for $i=1,\ldots,n$, meaning that $f_i$ and its first tangential derivative are square integrable along the edge $\Gamma_i$. We are interested in the Dirichlet-Neumann map associated with the classical Dirichlet problem
\numparts
\begin{eqnarray}
\Delta q  &=0 \,\,\quad \textrm{in $\Omega$,} \label{1a}\\
\,\,\,\,q&= f_i \quad \textrm{on $\Gamma_i$ for $i=1,\ldots,n$.} \label{1b}
\end{eqnarray}
\endnumparts
That is to say, we want to reconstruct the unknown Neumann boundary values from the known Dirichlet boundary data.

In practical applications the more physically relevant quantity in $(1)$ is the gradient field, $\nabla q = (\partial q/\partial x, \partial q/\partial y)$, or equivalently the complex derivative $\partial q/\partial z$ with $z=x+\ii y$. In \cite{fokas2001two} it was shown that any solution to \eref{1a} has an integral representation
\begin{equation} \frac{\partial q}{\partial z} = \frac{1}{2\pi} \sum_{i=1}^n \int_{\ell_i} e^{\ii \lambda z } \rho_i(\lambda)\, \dd\lambda \label{rep} \end{equation}
where the spectral functions $\{\rho_i(\lambda)\}_{i=1}^n$ are defined by 
\begin{equation} \rho_i(\lambda) = \int_{\Gamma_i} e^{-\ii\lambda z'}\frac{\partial q}{\partial z'} \, \dd z'  \label{spectralfns} \end{equation}
and the $\{\ell_i\}_{i=1}^n$ are rays in the complex plane orientated out towards infinity with $\arg (\lambda|_{\ell_i})= -\alpha_i$. The spectral functions satisfy the global relation
\begin{equation} \sum_{i=1}^n \rho_i(\lambda) =0. \label{gr1} \end{equation}
Note that the spectral functions \eref{spectralfns} contain information about the known boundary data $\{ f_i \}_{i=1}^n$ \emph{and} the unknown boundary values since $\partial q/\partial z'$ involves derivatives in the tangential and normal directions along $\Gamma_i$. We can interpret \eref{rep} as a formal solution to $(1)$ if we assume the spectral functions satisfy the global relation \eref{gr1}. Our aim is to solve \eref{gr1} for the unknown parts of the spectral functions which contain information about the unknown normal derivatives.

It was shown in \cite{fulton2004analytical} (c.f. \cite{ashton2012rigorous,ashton2012distributions}) that on the assumption that the global relation \eref{gr1} is satisfied, the integral representation \eref{rep} provides a solution to the boundary value problem $(1)$. This is important from both the theoretical and practical point of view -- it means that solving the global relation for the unknown parts of the spectral functions is equivalent to solving the boundary value problem $(1)$. By describing the global relation \eref{gr1} as a map between function spaces for the spectral functions, we are able to provide existence, uniqueness and stability results for the solution to the global relation. Perhaps more importantly, this gives a means for the practical numerical solution to the global relation for the unknown parts of the spectral functions.

It will be convenient to have a local description of the edges $\Gamma_i$. Let us introduce the local parametrisations $\psi_i:[-\sigma_i,\sigma_i]\rightarrow \Gamma_i$ with 
\[ \psi_i(\tau)= \frac{1}{2\sigma_i} \Big[ (\sigma_i + \tau) z_{i+1} + (\sigma_i - \tau) z_i \Big] \equiv m_i + \tau e^{\ii \alpha_i} \]
where $m_i = \textstyle\frac{1}{2}(z_i+z_{i+1})$ is the mid-point of the side $\Gamma_i$. For a function $f:\Gamma_i\rightarrow \mathbf{C}$ we write its pullback by $\psi_i$ by $\psi_i^*(f)(\tau) = f(\psi(\tau))$.
Using this notation the spectral functions are written
\begin{equation*} 
\rho_i(\lambda) = e^{-\ii\lambda m_i }\int_{-\sigma_i}^{\sigma_i} e^{\ii \alpha_i}\psi_i^*\! \left[ \frac{\partial q}{\partial z}\right]\!(\tau)e^{-\ii\lambda e^{\ii \alpha_i} \tau}\, \dd \tau. 
\end{equation*}
We note that
\[  \frac{\partial q}{\partial z}\bigg|_{\Gamma_i} = \frac{1}{2} e^{-\ii \alpha_i} \left( \frac{\partial q}{\partial \mathbf{t}} + \ii \frac{\partial q}{\partial \mathbf{n}}\right)\bigg|_{\Gamma_i} \]
where $\partial/\partial \mathbf{t}$ and $\partial/\partial \mathbf{n}$ denote the tangential and outward normal derivatives along $\Gamma_i$. Setting $\varphi^{\mathbf{t}}_i = \psi^*_i(\partial q/\partial \mathbf{t})$ and $\varphi^\mathbf{n}_i = \psi_i^*(\partial q/\partial \mathbf{n})$ we can write the spectral functions as
\[ \rho_i(\lambda) = \frac{ e^{-\ii \lambda m_i}}{2} \Big[ \hat{\varphi}^\mathbf{t}_i (e^{\ii \alpha_i} \lambda) + \ii \hat{\varphi}^\mathbf{n}_i (e^{\ii\alpha_i} \lambda) \Big] \]
where we have defined the Fourier transform
\[ \mathcal{F}: \varphi_i\mapsto \hat{\varphi}_i(\lambda) = \int_{-\sigma_i}^{\sigma_i} e^{-\ii \lambda \tau} \varphi_i(\tau)\, \dd \tau. \]
The global relation then takes the form 
\begin{equation}
\sum_{i=1}^n e^{-\ii \lambda m_i} \Big[  \hat{\varphi}^\mathbf{n}_i (e^{\ii\alpha_i} \lambda) -\ii \hat{\varphi}^\mathbf{t}_i (e^{\ii \alpha_i} \lambda) \Big]=0 \label{gra}
\end{equation}
which holds for all $\lambda \in \mathbf{C}$. It will be convenient to have a more symmetric form of the global relation. To this end, fix some $i\in \{1,\ldots,n\}$. Multiply \eref{gra} by $e^{\ii\lambda m_i}$ and replace $\lambda$ with $\lambda e^{-\ii\alpha_i}$. We find
\[ \Big[  \hat{\varphi}_i^\mathbf{n}(\lambda)- \ii \hat{\varphi}_i^\mathbf{t}(\lambda) \Big] + \sum_{j\neq i} e^{\ii e^{-\ii\alpha_i} \lambda (m_i - m_j)} \Big[  \varphi_j^\mathbf{n}(e^{-\ii \Delta_{ij}} \lambda) -\ii \hat{\varphi}_j^\mathbf{t}(e^{-\ii \Delta_{ij}} \lambda) \Big] =0, \]
for each $1\leq i \leq n$. Set $\Phi^\mathbf{n} = (\hat{\varphi}_1^\mathbf{n}, \ldots,\hat{\varphi}_n^\mathbf{n})^t$, $\Phi^\mathbf{t}=(\hat{\varphi}_1^\mathbf{t}, \ldots,\hat{\varphi}_n^\mathbf{t})^t$ and introduce the operator $\mathsf{T}=\mathsf{I}+\mathsf{K}$, where $\mathsf{I}$ is the identity and $\mathsf{K}$ is the linear operator defined by
\begin{equation} \Phi_i(\lambda) \mapsto (\mathsf{K}\Phi)_i(\lambda) = \sum_{j\neq i} e^{\ii e^{-\ii\alpha_i} (m_i - m_j)\lambda} \Phi_j(e^{-\ii \Delta_{ij}} \lambda), \qquad 1\leq i\leq n. \label{kdef} \end{equation}
Then the global relation can be written succinctly as
\begin{equation} \mathsf{T} (\Phi^\mathbf{n} - \ii \Phi^\mathbf{t})=0, \quad \lambda \in \mathbf{C}. \label{gr3} \end{equation}
Each of these $n$ equations are equivalent to the original global relation \eref{gra}. The vectors $\Phi^\mathbf{t}(\lambda)$ and $\Phi^\mathbf{n}(\lambda)$ contain the \emph{spectral} boundary data, which in this case is just the Fourier transform of the original functions. In what follows we characterise spectral Dirichlet-Neumann map $\Phi^\mathbf{t}\mapsto\Phi^\mathbf{n}$ defined by \eref{gr3}.

\section{The Real and Complex Paley-Wiener Spaces}
Here we discuss the relevant function spaces that will be used in the sequel and cement some of our notation.

The global relation for Laplace's equation has been given in \eref{gr3}. The components $\{ \hat{\varphi}_i^\mathbf{t}\}_{i=1}^n$ of the known vector $\Phi^\mathbf{t}$ are are related to the derivatives of the Dirichlet data $f_i \in H^1(\Gamma_i)$, and we have $\varphi^\mathbf{t}_i \in L^2[-\sigma_i,\sigma_i]$ for $i=1,\ldots,n$. The global relation contains the \emph{Fourier transform} of this data. It is natural then to work with the classical Paley-Wiener spaces
\[ PW^{\sigma_i} = \mathcal{F} L^2[-\sigma_i,\sigma_i], \]
which contain the Fourier transforms of square integrable functions whose support is contained in the interval $[-\sigma_i,\sigma_i]$. The classical Paley-Wiener theorem states
\[ \fl\quad\,\, PW^{\sigma_i}=\Big\{ f: \mathbf{C}\rightarrow \mathbf{C} \,\, \mathrm{entire}, \,\,\, \int_{-\infty}^\infty |f(x)|^2\,\dd x <\infty,\,\,\, |f(\lambda)| \lesssim_\epsilon e^{\sigma_i (|\lambda|+\epsilon)}\,\, \textrm{$\forall\epsilon>0$}\Big\}, \]
i.e. the space $PW^{\sigma_i}$ consists of entire functions of exponential type $\sigma_i$ whose restrictions to the real axis are square integrable. Paley-Wiener functions satisfy the important pointwise inequality
\[ |f(z_0)| \lesssim \|f\|_2 e^{\sigma|z_0|}, \quad f\in PW^\sigma.\]
We will refer to this as the Paley-Wiener inequality. For a standard treatment of the Paley-Wiener spaces we refer the reader to \cite{boas1956entire,levin1996lectures,paley1934fourier,seip2004interpolation}.

It will be convenient to work with slightly modified versions of the Paley-Wiener space. In the general setting of the Paley-Wiener theorem the space $L^2[-\sigma_i,\sigma_i]$ refers to \emph{complex} valued, square integrable functions. However, our data will be manifestly real. Hence forth we shall use $L^2_\mathbf{R}[-\sigma_i,\sigma_i]$ to denote the space of \emph{real} valued, square integrable functions. We will then work on
\[ PW^{\sigma_i}_\mathrm{sym} = \mathcal{F} L^2_{\mathbf{R}}[-\sigma_i,\sigma_i], \]
where the subscript ``sym" refers to symmetric. The reason for this is highlighted in the following simple lemma.
\begin{lemma}\label{pwlem}
The space $PW^{\sigma_i}_{\mathrm{sym}}$ is a closed subspace of $PW^{\sigma_i}$ whose members obey the symmetry condition
\[ f(\lambda) = f^\star(\lambda) \equiv  \overline{f(-\overline{\lambda})} \]
for all $\lambda\in\mathbf{C}$.
\end{lemma}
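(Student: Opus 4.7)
The plan is to dispatch the two assertions separately, with the symmetry condition handled by a direct computation on the Fourier integral and the closedness following from the Paley-Wiener isometry.

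For the symmetry, I would take any $f \in PW^{\sigma_i}_{\mathrm{sym}}$, so that $f = \mathcal{F}\varphi$ for some real-valued $\varphi \in L^2[-\sigma_i,\sigma_i]$, and evaluate
\[ \overline{f(-\overline{\lambda})} = \overline{\int_{-\sigma_i}^{\sigma_i} e^{-\ii(-\overline{\lambda})\tau} \varphi(\tau)\,\dd \tau} = \overline{\int_{-\sigma_i}^{\sigma_i} e^{\ii\overline{\lambda}\tau} \varphi(\tau)\,\dd \tau}. \]
Because $\tau \in \mathbf{R}$ and $\varphi(\tau) \in \mathbf{R}$, the complex conjugation can be pulled through the integrand: the conjugate of $e^{\ii \overline{\lambda}\tau}$ is $e^{-\ii \lambda \tau}$, while $\varphi(\tau)$ is unaffected. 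The right-hand side therefore collapses to the defining integral for $f(\lambda)$, which proves $f = f^\star$.

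For closedness, I would appeal to the Paley-Wiener theorem quoted above: $\mathcal{F}: L^2[-\sigma_i,\sigma_i] \to PW^{\sigma_i}$ is (up to the Plancherel constant) an isometric isomorphism when $PW^{\sigma_i}$ is equipped with the $L^2(\mathbf{R})$ norm inherited from the restriction to the real axis. The subspace $L^2_{\mathbf{R}}[-\sigma_i,\sigma_i] \subset L^2[-\sigma_i,\sigma_i]$ is closed as a \emph{real-linear} subspace, for instance because it is the kernel of the continuous real-linear map $\varphi \mapsto \varphi - \overline{\varphi}$. Its image $PW^{\sigma_i}_{\mathrm{sym}} = \mathcal{F} L^2_{\mathbf{R}}[-\sigma_i,\sigma_i]$ under the isometry is therefore closed in $PW^{\sigma_i}$.

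There is no serious obstacle here; the lemma is a bookkeeping step. The only point worth flagging is that $PW^{\sigma_i}_{\mathrm{sym}}$ is only a \emph{real-linear} closed subspace of $PW^{\sigma_i}$: it is not invariant under multiplication by $\ii$, since the map $f \mapsto f^\star$ is antilinear. This is consistent with the Dirichlet data being real-valued and will be compatible with the operator-theoretic setup that follows.
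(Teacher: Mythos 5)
Your proof is correct, and in fact it is more explicit than the paper's own argument on the two points the lemma literally asserts. The paper dismisses closedness as ``clear'' and states the forward symmetry without the computation, whereas you give a real argument for both: the conjugation pushed through the Fourier integral for the symmetry, and the Paley--Wiener isometry together with the closedness of $L^2_{\mathbf{R}}[-\sigma_i,\sigma_i]$ (as the kernel of the continuous real-linear map $\varphi\mapsto\varphi-\overline{\varphi}$) for the topological claim. The one thing the paper does that you do not is prove the \emph{converse}: if $f\in PW^{\sigma_i}$ satisfies $f=f^\star$, then $f\in PW^{\sigma_i}_{\mathrm{sym}}$ (the paper gets this by applying Fourier inversion to $f-f^\star=0$ to conclude the preimage $g$ is real a.e.). That converse is not part of the lemma as stated, so your proof is complete as a proof of the statement, but the converse is what makes the symmetry condition a genuine \emph{characterization} of $PW^{\sigma_i}_{\mathrm{sym}}$ inside $PW^{\sigma_i}$, a fact the paper implicitly leans on (e.g., in the subsequent remark about the $\mathrm{sym}/\mathrm{asym}$ decomposition). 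If you wanted to match the paper's usable content, you would add one more line: for $f\in PW^{\sigma_i}$ with $f=f^\star$, writing $f=\mathcal{F}g$ and subtracting gives $\mathcal{F}(g-\overline{g})=0$, whence $g$ is real a.e.\ by Fourier inversion. Your closing observation that $PW^{\sigma_i}_{\mathrm{sym}}$ is only a real-linear subspace (since $f\mapsto f^\star$ is antilinear) is a useful point the paper leaves implicit until it later declares $X_{\mathrm{sym}}$ a real vector space.
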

\begin{proof}
That $PW^{\sigma_i}_{\mathrm{sym}}$ is closed in $PW^{\sigma_i}$ is clear. In addition, if $f \in \mathcal{F}L^2_{\mathbf{R}}[-\sigma_i,\sigma_i]$ then the symmetry condition is satisfied. Conversely, if $f\in PW^{\sigma_i}$ and obeys the symmetry condition then
\[ 0=f(\lambda) - \overline{f(-\overline{\lambda})} = \int_{-\sigma_i}^{\sigma_i} e^{-\ii\lambda \tau} \Big( g(\tau) - \overline{g(\tau)}\Big)\dd \tau \]
for some $g\in L^2[-\sigma_i,\sigma_i]$. But then the Fourier inversion theorem implies that $g\in L^2_{\mathbf{R}}[-\sigma_i,\sigma_i]$, so $f\in \mathcal{F}L^2_\mathbf{R}[-\sigma_i,\sigma_i]$.
\end{proof}
\begin{remark}
It is clear that the classical Paley-Wiener space can be decomposed as
\[ PW^{\sigma_i} = PW^{\sigma_i}_\mathrm{sym} \oplus PW^{\sigma_i}_{\mathrm{asym}} \]
where the latter space consists of Fourier transforms of imaginary valued, square integrable functions. The characterisation of this space is given by an \emph{anti}-symmetry condition, where an extra minus sign appears.
\end{remark}
It is well-known \cite[Ch. 6]{seip2004interpolation} that $PW^{\sigma_i}$ is a closed subspace of $L^2(\mathbf{R})$. It follows that $PW^{\sigma_i}$ is a Hilbert space when equipped with inner product
\[ (f_1,f_2) = \int_{-\infty}^\infty f_1(x) \overline{f_2(x)}\,\dd x. \]
The same is true of $PW^{\sigma_i}_{\mathrm{sym}}$ with this inner-product. This means that one can treat $PW^{\sigma_i}_{\mathrm{sym}}$ as a closed subspace of $L^2(\mathbf{R})$ whose elements have an analytic extension to the entire complex plane which is of exponential type $\sigma_i$ and obeys the necessary symmetry condition.

We introduce the function space $X = PW^{\sigma_1}\times \cdots \times PW^{\sigma_n}$ with norm
\[ \| \Phi \|_X = \left( \sum_{i=1}^n \int_{-\infty}^\infty |\Phi_i(\lambda)|^2\, \dd\lambda\right)^{1/2} \equiv \left(\sum_{i=1}^n \|\Phi_i\|_2^2 \right)^{1/2}. \]
where here and throughout $\|\cdot\|_2$ denotes the usual $L^2$ norm on the real line. Using the decomposition $PW^{\sigma_i} = PW^{\sigma_i}_\mathrm{sym} \oplus PW^{\sigma_i}_{\mathrm{asym}}$ we may write $X$ as
\[ X = X_\mathrm{sym} \oplus X_\mathrm{asym}. \]
It will be convenient to regard $X_{\mathrm{sym}}$ as the ``real part" of $X$, while $X_{\mathrm{asym}}$ is the ``imaginary part". Both are Banach spaces in their own right when equipped with the norm $\|\cdot\|_X$. Also set $Y = L^2(\mathbf{R}^-)^{\times n}$ with norm
\[ \| \Phi \|_Y = \left( \sum_{i=1}^n \int_{-\infty}^0 |\Phi_i(\lambda)|^2\, \dd\lambda\right)^{1/2} \equiv \left( \sum_{i=1}^n \| \Phi_i\|_{2,-}^2\right)^{1/2}, \]
where here and throughout $\|\cdot \|_{2,-}$ denotes the usual $L^2$ norm on the negative real axis. It is straightforward to show that $\|\cdot \|_X$ and $\|\cdot\|_Y$ are equivalent norms on $X_\mathrm{sym}$ and $X_\mathrm{asym}$ owning to the symmetry and anti-symmetry properties of the elements of the respective spaces. We note the isomorphism $X\simeq L^2(\partial \Omega)$. Each of $X_\mathrm{sym}$, $X_\mathrm{asym}$ and $Y$ are Hilbert spaces when equipped with the appropriate inner product, but we shall only need their Banach space structure.

We will often refer to the Fourier transform of an $L^2(\mathbf{R})$ function, and this is to be understood in the \emph{limit-in-the-mean} sense \cite{reed1980functional}. If $T:U\rightarrow V$ is a continuous linear map between normed spaces we write $T\in\mathcal{L}(U,V)$. A norm-bounded subset $S$ of a normed space $U$ is one in which there is some constant $C$ such that $\|u\|\leq C$ for each $u\in S\subset U$.

\section{Some Functional-Analytic Results}
Here we prove some functional-analytic results for the Paley-Wiener spaces which will prove useful for the purposes of studying the spectral Dirichlet-Neumann map.

Throughout this section $\mathcal{X}$ will denote an arbitrary measure space with positive measure $\mu$. We use $L^p(\mathcal{X},\mu)$ with $p\in [1,\infty)$ to denote the Banach space of (equivalence classes of) complex valued measurable functions on $\mathcal{X}$ with norm
\[ f\mapsto \left( \int_\mathcal{X} |f|^p\, \dd \mu \right)^{1/p}. \]
The following theorem will be of particular importance when studying the continuity of the Dirichlet-Neumann map, but seems of interest in its own right.
\begin{theorem}\label{closedrange}
Let $T:PW^\sigma\rightarrow L^p(\mathcal{X},\mu)$ be a continuous linear operator. Suppose also that $T$ is also continuous with respect to the topology of point-wise convergence, i.e. if $\{f_n\}_{n\geq 1}$ is a sequence in $PW^\sigma$ and $f_n\rightarrow f$ pointwise, then $Tf_n \rightarrow Tf$ pointwise in $L^p(\mathcal{X},\mu)$. Then $T$ has closed range.
\end{theorem}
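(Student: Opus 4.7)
The plan is to start from a sequence $(Tf_n)$ that converges to some $g$ in $L^p(\mathcal{X},\mu)$ and exhibit an element $f\in PW^\sigma$ with $Tf=g$, by extracting a pointwise convergent subsequence of the preimages $\{f_n\}$. The main tools are the Hilbert space structure on $PW^\sigma$, the Paley-Wiener inequality $|f(z_0)|\lesssim \|f\|_2\, e^{\sigma|z_0|}$ (which in particular shows that point evaluation is a bounded linear functional on $PW^\sigma$), Montel's theorem applied to normal families of entire functions, and the weak compactness of bounded sets in the Hilbert space $PW^\sigma$.

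The first step is to project each $f_n$ onto the closed subspace $(\ker T)^\perp$; this leaves the images $Tf_n$ unchanged and provides a canonical choice of preimage. The main obstacle, and the heart of the proof, is then to show that the resulting sequence is bounded in $PW^\sigma$. My plan is to argue by contradiction via the normalisation $h_n = f_n/\|f_n\|$, which would satisfy $\|h_n\|=1$ and $Th_n\to 0$ in $L^p$ (since $\|Tf_n\|_{L^p}$ stays bounded). The Paley-Wiener inequality makes $\{h_n\}$ a normal family of entire functions, so Montel's theorem yields a subsequence $h_{n_k}$ converging locally uniformly to an entire function $h$; weak compactness of the unit ball of $PW^\sigma$ together with the continuity of point evaluation identifies $h$ with a weak limit, placing it in $PW^\sigma$. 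Applying the pointwise-continuity hypothesis gives $Th_{n_k}\to Th$ pointwise, while extracting an a.e.\ convergent subsequence from $Th_n\to 0$ in $L^p$ forces $Th=0$; since $h$ is a weak limit of elements of the closed subspace $(\ker T)^\perp$ it lies there too, and so $h=0$. The pointwise-continuity hypothesis, applied more carefully, must then be used to promote $h_{n_k}\to 0$ pointwise into a genuine contradiction with $\|h_{n_k}\|=1$; this is the delicate technical point, because in general a bounded pointwise-null sequence in $PW^\sigma$ need not converge in norm (witness the Shannon sampling basis).

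Once boundedness of $\{f_n\}$ has been established, the remainder of the proof is a routine normal-family argument: Montel and weak compactness extract a subsequence $f_{n_k}$ converging locally uniformly to some $f\in PW^\sigma$; the pointwise-continuity hypothesis yields $Tf_{n_k}\to Tf$ pointwise; and a further $\mu$-almost everywhere convergent subsequence of $Tf_n$ to $g$ (extracted from the $L^p$ convergence) identifies $Tf=g$ $\mu$-a.e. This places $g$ in the range of $T$ and hence proves the range is closed.
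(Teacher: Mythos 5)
Your plan contains a genuine gap, which you yourself flag at the end of the boundedness step. You hope to derive a contradiction from $\|h_{n_k}\|=1$ together with $h_{n_k}\to 0$ pointwise, but these two facts are perfectly compatible in $PW^\sigma$: the paper's Remark following Lemma~\ref{normboundedlem} gives exactly such a sequence (the translated sinc functions, with $\|f_m\|_2=1$ and $f_m\to 0$ locally uniformly). The pointwise-continuity hypothesis on $T$ cannot rescue this, because by the time you have concluded $Th=0$ and $h=0$ you have already extracted everything that hypothesis offers; there is no further leverage to upgrade pointwise convergence of $h_{n_k}$ to norm convergence. Worse, the a priori boundedness of the projected preimages $\{f_n\}\subset(\ker T)^\perp$ that you are trying to establish is \emph{equivalent} to $T$ being bounded below on $(\ker T)^\perp$, which is equivalent to the closed-range conclusion itself, so the ``main obstacle'' in your plan is not a technical lemma but the entire theorem, and the contradiction argument is circular in substance and cannot close.

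The paper sidesteps this completely by invoking a criterion from Abramovich--Aliprantis (cited at p.~79): a continuous linear operator between Banach spaces has closed range provided it maps norm-bounded, norm-closed sets to closed sets. With that reduction in hand, the boundedness your plan founders on is simply given: one fixes a closed ball $X_\delta=\{f\in PW^\sigma:\|f\|_2\le\delta\}$, takes $g_n=Tf_n\to g$ in $L^p$ with $f_n\in X_\delta$, passes to a $\mu$-a.e.\ convergent subsequence of $g_n$, and then runs precisely the normal-family and pointwise-continuity argument you describe in your final paragraph to produce $f\in X_\delta$ with $Tf=g$. That last part of your proposal is correct and coincides with the paper's; the missing ingredient is the reduction to bounded closed sets, not a sharper contradiction or a cleverer use of weak compactness. (You also do not need the Hilbert-space projection onto $(\ker T)^\perp$ once you adopt this route; the paper uses only the Banach-space structure.)
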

The proof of requires the following ``pseudo-compactness" lemma which which will be of use throughout the paper.
\begin{lemma}\label{normboundedlem}
Any norm-bounded sequence in $PW^\sigma$ contains a subsequence that converges pointwise and locally uniformly to an element of $PW^\sigma$ that obeys the same norm bound.
\end{lemma}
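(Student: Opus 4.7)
The plan is to combine the Paley--Wiener inequality (which converts the $L^2$ norm bound into a locally uniform pointwise bound on all of $\mathbf{C}$) with Montel's theorem to extract a subsequence that converges locally uniformly to an entire function, and then to verify that this limit lies in $PW^\sigma$ with the required norm bound.

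Concretely, let $\{f_n\}_{n\geq 1}$ be a sequence in $PW^\sigma$ with $\|f_n\|_2 \leq M$. First, by the Paley--Wiener inequality stated in Section 3, there is a constant $C>0$ such that
\[ |f_n(z)| \leq C\|f_n\|_2 e^{\sigma |z|} \leq CM e^{\sigma |z|}, \qquad z\in\mathbf{C}, \]
so $\{f_n\}$ is uniformly bounded on every compact subset of $\mathbf{C}$. Montel's theorem then supplies a subsequence $\{f_{n_k}\}$ converging locally uniformly on $\mathbf{C}$ to some entire function $f$.

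Next I would verify that $f\in PW^\sigma$ and $\|f\|_2\leq M$. Passing the pointwise bound to the limit gives $|f(z)|\leq CM e^{\sigma|z|}$ for all $z\in\mathbf{C}$, which in particular implies $|f(\lambda)|\lesssim_\epsilon e^{\sigma(|\lambda|+\epsilon)}$ for every $\epsilon>0$, so $f$ is entire of exponential type $\sigma$ in the sense of the characterisation of $PW^\sigma$ recalled in Section 3. For the $L^2$ bound, locally uniform convergence on $\mathbf{C}$ gives pointwise convergence $f_{n_k}(x)\to f(x)$ for every $x\in\mathbf{R}$, and Fatou's lemma then yields
\[ \int_{-\infty}^\infty |f(x)|^2\,\dd x \leq \liminf_{k\to\infty} \int_{-\infty}^\infty |f_{n_k}(x)|^2\,\dd x \leq M^2. \]
In particular $f\in L^2(\mathbf{R})$ with $\|f\|_2\leq M$, so $f\in PW^\sigma$ and the claimed norm bound holds.

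The substantive step is the second one: Montel alone only gives local uniform convergence on $\mathbf{C}$ and not any control on the real line, while the space $PW^\sigma$ is defined by a global $L^2$ condition there. Fatou's lemma is exactly what bridges these, converting pointwise convergence on $\mathbf{R}$ together with the uniform $L^2$ bound on the prelimit sequence into the desired $L^2$ bound on $f$. The only other point worth checking is that the Paley--Wiener pointwise bound passes to the limit in a way compatible with the definition of exponential type used in the paper, and this is immediate from $|f(z)|\leq CMe^{\sigma|z|}\leq CMe^{\sigma(|z|+\epsilon)}$.
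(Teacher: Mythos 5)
Your proof is correct and follows essentially the same route as the paper's: Paley--Wiener inequality to get locally uniform boundedness, Montel's theorem to extract a locally uniformly convergent subsequence, passage of the exponential-type bound to the limit, and Fatou's lemma for the $L^2$ bound. If anything, your handling of the exponential-type step is slightly cleaner than the paper's, since you simply pass the uniform pointwise bound $|f_n(z)|\leq CMe^{\sigma|z|}$ to the limit rather than invoking the $R$-dependent triangle-inequality argument the paper uses.
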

\begin{proof}
Let $\{f_n\}_{n\geq 1}$ be norm bounded in $PW^\sigma$. Then using the inequality Paley-Wiener inequality $|f(z_0)| \leq e^{\sigma|z_0|} \|f\|_2$ we deduce that the sequence $\{f_n\}_{n\geq 1}$ is locally uniformly bounded, i.e. for each compact $K\subset \mathbf{C}$ we have $\sup_K |f_n(z)| \leq C_K$ for some constant $C_K$. By Montel's theorem, we can extract a convergent subsequence that converges pointwise, locally uniformly to some analytic function. Let $\{f_{n_k}\}_{k\geq 1}$ be this subsequence so that for each compact $K$
\[ \lim_{k\rightarrow \infty} \sup_{ K} |f_{n_k}(z) - f(z)| =0 \]
for some analytic function $f$. We claim that $f\in X$ with the same norm bound. First, note that $f$ is certainly of exponential type $\sigma$. Indeed, if we fix $R>0$ and choose $k$ sufficiently large so that $|f(z)-f_{n_k}(z)|\leq 1/R$ for $|z|\leq R$ we have
\begin{eqnarray*}
\sup_{|z|\leq R} |f(z)| &\leq \sup_{|z|\leq R} |f(z) - f_{n_k}(z)| + \sup_{|z|\leq R} |f_{n_k}(z)| \\
&\leq \frac{1}{R} + \|f_{n_k}\|_2 e^{\sigma|z|} \\
&\lesssim e^{\sigma|z|}
\end{eqnarray*}
where the final constant is independent of $R$. The norm bound is an immediate consequence of Fatou's lemma.
\end{proof}
\begin{remark}
One might examine the statement of this lemma and speciously reason that \emph{the Paley-Wiener spaces are Montel spaces}, i.e a uniformly bounded subset of $PW^\sigma$ contains a convergent subsequence. This would be false because $PW^\sigma$ is a Banach space, and since the unit ball is not compact in an infinite dimensional Banach space it cannot possibly be a Montel space. What we have shown is the following: given a bounded sequence $\{f_m\}_{m\geq 1}$ in $PW^\sigma$, one can extract a subsequence $\{f_{m_k}\}_{k\geq 1}$ that converges pointwise, locally uniformly to some $f \in PW^\sigma$. However, we have \emph{not} shown that $f_{m_k} \rightarrow f$ in $PW^\sigma$, i.e. $\| f-f_{m_k}\|_2 \rightarrow 0$, and in general this will not be the case. Indeed, if we consider the standard basis functions for $PW^\sigma$
\[ f_m(\lambda) = \sqrt{\frac{\sigma}{\pi}}\frac{ \sin(\sigma\lambda - \pi m)}{(\sigma\lambda - \pi m)} \equiv \sqrt{\frac{\sigma}{\pi}}\frac{(-1)^m \sin (\sigma\lambda)}{(\sigma\lambda - \pi m)} \]
then it is clear that $\| f_m \|_2 =1$ but $f_m\rightarrow 0$ pointwise and locally uniformly.
\end{remark}
\begin{proof}[Proof of Theorem \ref{closedrange}]
It is enough to prove that $T$ maps norm-bounded, closed sets in $PW^\sigma$ to closed sets in $L^p(\mathcal{X},\mu)$ \cite[p. 79]{abramovich2002invitation}. Let $X_\delta$ be a closed, norm-bounded subset of $PW^\sigma$ with $\|f\|_2\leq \delta$ for each $f\in X_\delta$. Set $g_n = Tf_n$ for some sequence $\{f_n\}_{n\geq 1}$ in $X_\delta$ and suppose $g_n\rightarrow g$ in $L^p(\mathcal{X},\mu)$. Then it is well-known that one can extract a subsequence $\{g_{n_k}\}_{k\geq 1}$ that converges pointwise almost $\mu$-everywhere to $g$ \cite[Th. 3.12]{rudin1987realandcomplex}. Hence
\[ g(x) = \lim_{k\rightarrow \infty} \left( Tf_{n_k}\right)(x) \quad \textrm{almost $\mu$-everywhere in $\mathcal{X}$.} \]
The sequence $\{f_{n_k}\}_{k\geq 1}$ is a norm-bounded sequence with each $\|f_{n_k}\|_2 \leq \delta$. By Lemma 1, we can extract a subsequence $\{ f_{n_{k_l}}\}_{l\geq 1}$ that converges pointwise (and locally uniformly) to some $f\in X_{\delta}$. Using the pointwise continuity of $T$ we deduce
\begin{eqnarray*} g(x) &= \lim_{l\rightarrow \infty} \left( Tf_{n_{k_l}}\right)(x) \\
&= T\left( \lim_{l\rightarrow \infty} f_{n_{k_l}}\right)(x)= (T f)(x) \quad \textrm{almost $\mu$-everywhere in $\mathcal{X}$.}
\end{eqnarray*}
So $g=Tf$ in $L^p(\mathcal{X},\mu)$ for some $f\in X_\delta$. Hence the image each norm bounded closed set in $PW^\sigma$ is closed in $L^p(\mathcal{X},\mu)$, and we deduce that $T$ must have closed range.
\end{proof}

\section{The Spectral Dirichlet-Neumann Map}\label{spectraldnmap}
Recall the global relation \eref{gr3} is
\[ \mathsf{T} (\Phi^\mathbf{n} - \ii \Phi^\mathbf{t})=0, \quad \lambda \in \mathbf{C} \]
where $\mathsf{T}=\mathsf{I}+\mathsf{K}$, with $\mathsf{K}$ defined in \eref{kdef}. We will need some properties of this operator.
\begin{lemma}\label{lem2}
We have $\mathsf{T}\in \mathcal{L}(X,Y)$ and $\mathsf{T}$ is also continuous with respect to the topology of pointwise convergence.
\end{lemma}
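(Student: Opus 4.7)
The plan is to split $\mathsf{T} = \mathsf{I} + \mathsf{K}$ and treat each summand separately. The identity $\mathsf{I}: X \to Y$ simply restricts an $L^2(\mathbf{R})$-function to $\mathbf{R}^-$, so it satisfies $\|\mathsf{I}\Phi\|_Y \leq \|\Phi\|_X$ and preserves pointwise convergence trivially. The substantive task is to bound the off-diagonal operator $\mathsf{K}$, which reduces to showing that for each ordered pair $i \neq j$ the summand
\[ S_{ij}(\lambda) = e^{\ii e^{-\ii\alpha_i}(m_i - m_j)\lambda}\Phi_j(e^{-\ii\Delta_{ij}}\lambda) \]
belongs to $L^2(\mathbf{R}^-)$ with norm controlled by $\|\Phi_j\|_2$.

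To express $S_{ij}$ as a contour integral, I would first invoke the Paley-Wiener theorem to write $\Phi_j = \hat\varphi_j$ for some $\varphi_j \in L^2[-\sigma_j, \sigma_j]$, yielding
\[ S_{ij}(\lambda) = \int_{-\sigma_j}^{\sigma_j} e^{\ii \lambda\zeta_{ij}(\tau)}\varphi_j(\tau)\,\dd\tau, \qquad \zeta_{ij}(\tau) = e^{-\ii\alpha_i}(m_i - \psi_j(\tau)), \]
which is affine in $\tau$. The essential structural input is convexity of $\Omega$: since $\psi_j(\tau) \in \bar\Omega$, rotation by $-\alpha_i$ places $\psi_j(\tau) - m_i$ in the closed upper half-plane, so $\Im\zeta_{ij}(\tau) \leq 0$ throughout $[-\sigma_j,\sigma_j]$. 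Consequently the integrand has modulus at most $1$ for $\lambda \leq 0$.

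Next I would compute $\|S_{ij}\|_{L^2(\mathbf{R}^-)}^2$ explicitly. Truncating $\lambda$ to $[-N,0]$, applying Fubini on the compact region, and letting $N \to \infty$ by dominated convergence (using an a priori Stieltjes-type bound to dominate) gives
\[ \|S_{ij}\|_{L^2(\mathbf{R}^-)}^2 = \iint_{[-\sigma_j,\sigma_j]^2} \frac{\varphi_j(\tau)\overline{\varphi_j(\tau')}}{-\Im\zeta_{ij}(\tau) - \Im\zeta_{ij}(\tau') + \ii\bigl(\Re\zeta_{ij}(\tau) - \Re\zeta_{ij}(\tau')\bigr)}\,\dd\tau\,\dd\tau'. \]
When $\Gamma_i$ and $\Gamma_j$ are non-adjacent the denominator is bounded away from zero and the kernel is $L^\infty$, giving the estimate trivially. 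When they share a vertex, $\Im\zeta_{ij}$ vanishes at exactly one endpoint $\tau_*$; after the substitution $u = |\tau - \tau_*|$ the kernel takes the homogeneous form $1/(\zeta u + \bar\zeta u')$ on $[0,2\sigma_j]^2$ and is pointwise dominated by $C/(u + u')$. Since the Stieltjes kernel $1/(u + u')$ defines a bounded bilinear form on $L^2(0,\infty)$, the estimate again follows.

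Pointwise continuity is immediate: if $\Phi_j^{(k)} \to \Phi_j$ pointwise on $\mathbf{C}$, then every summand in the finite sum defining $(\mathsf{T}\Phi^{(k)})_i(\lambda)$ converges to the corresponding summand of $(\mathsf{T}\Phi)_i(\lambda)$ at each fixed $\lambda$. The main obstacle is the adjacent-edge case in the $L^2$-estimate: the convexity inequality $\Im\zeta_{ij} \leq 0$ ceases to be strict exactly at the shared vertex, so the denominator degenerates there, and one must correctly identify the resulting singular kernel as being dominated by a Stieltjes operator to close the argument.
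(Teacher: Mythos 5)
Your argument matches the paper's in structure: split $\mathsf{T}=\mathsf{I}+\mathsf{K}$, represent each off-diagonal summand via the inverse Fourier transform of $\Phi_j$, use convexity to dispatch non-adjacent edges through uniform decay, and isolate the adjacent-edge terms as the critical case. Where the paper identifies those adjacent terms as a Laplace transform along a ray of an $L^2[0,2\sigma_j]$ function and cites Hardy's $L^2(\mathbf{R}^+)\to L^2(\mathbf{R}^+)$ boundedness of the Laplace transform, you instead square the $L^2(\mathbf{R}^-)$ norm, apply Fubini, and reduce to boundedness of the Stieltjes kernel $1/(u+u')$ --- two guises of the same estimate, since the Stieltjes operator is precisely $L^*L$ for $L$ the Laplace transform.
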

\begin{proof}
That $\mathsf{T}$ respects pointwise convergence is obvious. To prove the relevant estimate for the first claim we use $\mathcal{F}^{-1}\mathcal{F}=\mathsf{I}$ on $L^2(\mathbf{R})$ to write
\begin{equation} e^{\ii e^{-\ii\alpha_i} (m_i - m_j)\lambda} \Phi_j(e^{-\ii \Delta_{ij}} \lambda) = \frac{1}{2\pi}\int_{-\sigma_j}^{\sigma_j} e^{\ii e^{-\ii\alpha_i} (m_i - m_j - \tau e^{\ii\alpha_j})\lambda} \hat{\Phi}_j(-\tau)\,\dd \tau. \label{irep} \end{equation}
Using the convexity of the domain $\Omega$ one sees that for $j\neq i,i\pm 1$
\[ \epsilon< \arg \Big( e^{-\ii\alpha_i} (\tau e^{\ii\alpha_j}+m_j - m_i) \Big) \leq \pi-\epsilon \]
for some $\epsilon>0$. After an application of Cauchy-Schwarz and Parseval's theorem we get estimates of the form
\[ \left| e^{\ii e^{-\ii\alpha_i} (m_i - m_j)\lambda} \Phi_j(e^{-\ii \Delta_{ij}} \lambda) \right| \lesssim e^{\lambda \sin\epsilon} \| \Phi_j \|_2. \]
So for $\lambda\leq 0$ we have
\[ \left| (\mathsf{T}\Phi)_i(\lambda)\right| \lesssim |\Phi_i(\lambda)| + \left|\sum_{j=i\pm 1}e^{\ii e^{-\ii\alpha_i} (m_i - m_j)\lambda} \Phi_j(e^{-\ii \Delta_{ij}} \lambda)\right| + e^{-|\lambda|\sin \epsilon} \|\Phi\|_X. \]
The $\|\cdot\|_{2,-}$ norm of the first and last terms are clearly dominated by $\|\Phi\|_X$, so we need only look at the remaining terms. Using the representation \eref{irep}, these terms are, after an appropriate change of variables, the Laplace transform (along a ray) of an $L^2$ function supported on $[0,2\sigma_j]$ for $j=i\pm 1$. They have the generic form
\[ e^{\ii \sigma_j\lambda} \int_0^{2\sigma_j} e^{-|\lambda| w_j \tau} h_j(\tau)\,\dd\tau, \quad (\lambda\leq 0) \]
for appropriate $h_j$ and $w_j$ with $\Re w_j >0$. It is well-known \cite{hardy1929remarks} that the Laplace transform defines a bounded linear map from $L^2(\mathbf{R}^+)$ to $L^2(\mathbf{R}^+)$. We deduce that the $\| \cdot \|_{2,-}$ norm of these terms are bounded by constant multiples of $\|\Phi_j\|_2$ for $j=i\pm 1$. Using Minkowski's inequality we find $\|(\mathsf{T}\Phi)_i\|_{2,-}\lesssim_i \|\Phi\|_X$. By applying these estimates to each of the $n$ components of $\mathsf{T}\Phi$, we deduce that $\mathsf{T} \in \mathcal{L}(X,Y)$.
\end{proof}
\begin{remark}
Obviously $\mathsf{T}\in \mathcal{L}(X_\mathrm{sym},Y)$ and $\mathcal{L}(X_\mathrm{asym},Y)$ also.
\end{remark}
\begin{remark}
In the context of the previous lemma, we should interpret the global relation \eref{gr3} as describing a map from $X$ to $Y$. In this case we should only really consider $\lambda \in \mathbf{R}^-$. However, all the terms appearing in \eref{gr3} are entire functions so we can make a unique analytic extension to the entire complex plane. We play fast and loose in this regard, making no distinction between $\mathsf{T}\Phi$ defined on $\mathbf{R}^-$ and its analytic extension, say $(\mathsf{T}\Phi)_\mathrm{ext}$, defined on the the entire complex plane with $(\mathsf{T}\Phi)_\mathrm{ext} = \mathsf{T}\Phi$ on $\mathbf{R}^-$.
\end{remark}
In this current setting it is clear that the spectral Dirichlet-Neumann map $\Phi^\mathbf{t}\mapsto \Phi^\mathbf{n}$ is determined by the null space of the operator $\mathsf{T}\in \mathcal{L}(X,Y)$. We write $N(\mathsf{T})$ for the null space. We arrive at the following problem:
\[ \textrm{Given $ \Phi^\mathbf{t} \in X_{\mathrm{sym}}$ find $\Phi \in N(\mathsf{T})$ with $\Re \Phi = \Phi^\mathbf{t}$. } \]
The Neumann data is then $\Phi^\mathbf{n} = \Im \Phi$. We note the analogy between the operators $\mathsf{T} \leftrightarrow \bar{\partial}$, the latter being the $\bar{\partial}$-derivative which annihilates complex analytic functions. The functions $\Phi^\mathbf{t}$ and $\Phi^\mathbf{n}$ playing the r\^oles of the real and imaginary parts of the analytic function. For each $\Phi \in X_\mathrm{sym}$ we set
\[ \mathsf{DN}(\Phi) = \{ \Phi' \in X_\mathrm{sym}: \Phi+\ii\Phi' \in N(\mathsf{T}) \}. \]
This set will prove useful.
\begin{lemma}\label{uniquelem}
For each $\Phi \in X_\mathrm{sym}$ the set $\mathsf{DN}(\Phi)$ is a singleton.
\end{lemma}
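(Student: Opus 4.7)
The plan is to break the claim into uniqueness and existence. Uniqueness is the core complex-analytic statement and can be handled with Liouville and Phragm\'en--Lindel\"of; existence can be read off from the classical solvability of the Dirichlet problem on a convex polygon.

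For uniqueness, I would start from two candidates $\Phi_1',\Phi_2' \in \mathsf{DN}(\Phi)$ and set $\Psi = \Phi_1' - \Phi_2'$. Since the definition \eref{kdef} shows that $\mathsf{K}$, and hence $\mathsf{T} = \mathsf{I}+\mathsf{K}$, is $\mathbb{C}$-linear, subtracting the two null relations $\mathsf{T}(\Phi + \ii\Phi_k') = 0$ gives $\ii\mathsf{T}\Psi = 0$, so $\Psi \in X_\mathrm{sym} \cap N(\mathsf{T})$. The claim therefore reduces to proving
\[
X_\mathrm{sym} \cap N(\mathsf{T}) = \{0\}.
\]
For such a $\Psi$, with components $\hat{\psi}_i \in PW^{\sigma_i}_\mathrm{sym}$, the global relation in the symmetric form reads
\[
\hat{\psi}_i(\lambda) = -\sum_{j\neq i} e^{\ii e^{-\ii\alpha_i}(m_i-m_j)\lambda}\,\hat{\psi}_j(e^{-\ii\Delta_{ij}}\lambda), \qquad \lambda \in \mathbf{C}.
\]

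The strategy is to show that each $\hat{\psi}_i$ is uniformly bounded on $\mathbf{C}$ and invoke Liouville. The ingredients I would use are (i) the Paley--Wiener inequality, which gives $|\hat{\psi}_i(x+\ii y)|\lesssim \|\psi_i\|_2 e^{\sigma_i|y|}$; (ii) Cauchy--Schwarz applied to the Fourier integral for $\hat{\psi}_i$, which gives $\sup_{\mathbf{R}}|\hat{\psi}_i| \leq \sqrt{2\sigma_i}\,\|\psi_i\|_2$; and (iii) the sectoral estimates already used in the proof of Lemma \ref{lem2}. Concretely, for each $i$ the convexity of $\Omega$ provides a sector $S_i \subset \mathbf{C}$ (roughly, the one pointing outwards through $\Gamma_i$) in which the non-adjacent terms $j \neq i, i\pm 1$ decay exponentially, while the adjacent terms $j = i\pm 1$ can be written as Laplace transforms of $L^2$ functions on rays and are bounded by the Hardy boundedness of the Laplace transform. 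Thus $\hat{\psi}_i$ is uniformly bounded on $S_i$. Combining with (ii), $\hat{\psi}_i$ is bounded on a family of rays whose angular gaps can be kept below $\pi/\sigma_i$, and Phragm\'en--Lindel\"of then yields a uniform bound on all of $\mathbf{C}$. Liouville forces $\hat{\psi}_i$ to be constant, and the $L^2$ condition on the real axis forces the constant to vanish, so $\Psi = 0$.

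For existence, I would take $\Phi \in X_\mathrm{sym}$, recover real Dirichlet tangential data $\varphi_i^{\mathbf{t}} \in L^2_\mathbf{R}[-\sigma_i,\sigma_i]$ by Fourier inversion, lift by integration along edges to obtain $f \in H^1(\partial\Omega)$, and invoke \cite{verchota1984layer} to produce a harmonic $q\in H^1(\Omega)$ solving \eref{1a}--\eref{1b}. Its normal derivative lies in $L^2(\partial\Omega)$ and its edge-wise Fourier transforms define the required $\Phi' \in X_\mathrm{sym}$; the global relation \eref{gra} follows from Green's identity applied to $q$ against $e^{-\ii\lambda z}$, so $\Phi' \in \mathsf{DN}(\Phi)$.

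The main obstacle I anticipate is step (iii) of the uniqueness argument: while the non-adjacent contributions decay cleanly by the convexity estimate of Lemma \ref{lem2}, the adjacent contributions are only Laplace transforms of $L^2$ functions, so the sectoral bound on $\hat{\psi}_i$ must be extracted with some care and will not be uniform in the full sector. A related subtlety is that the Phragm\'en--Lindel\"of sectors must have opening below the critical angle $\pi/\sigma_i$, which dictates how many sectors are needed to cover $\mathbf{C}$ and forces the sectoral bounds to be patched together carefully.
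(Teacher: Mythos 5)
Your reduction to proving $X_\mathrm{sym}\cap N(\mathsf{T})=\{0\}$ matches the paper, and so does the general Liouville/Phragm\'en--Lindel\"of strategy, but the execution as sketched does not close. The central problem is that you aim to show each $\hat{\psi}_i$ is itself uniformly bounded on $\mathbf{C}$. A nonzero $\hat{\psi}_i\in PW^{\sigma_i}_\mathrm{sym}$ genuinely grows like $e^{\sigma_i|\Im\lambda|}$ near the imaginary axis, and the global relation does not provide any control there: the convexity estimate bounds the $j\neq i$ terms only near $\arg\lambda\in\{0,\pi\}$, so you are left trying to bridge a sector of opening essentially $\pi$. For entire functions of exponential type (order $1$), the Phragm\'en--Lindel\"of critical opening is $\pi$ -- not $\pi/\sigma_i$ as you state; the type only affects constants, not the critical angle -- and $\pi$ is precisely where the principle fails (e.g.\ $e^{\ii\sigma_i\lambda}$ is bounded on $\mathbf{R}$ but unbounded in $\Im\lambda>0$). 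So there is no ``family of rays with small gaps'' available, and the bound you want on $\hat{\psi}_i$ over all of $\mathbf{C}$ cannot be obtained this way.

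The paper's argument is built specifically to avoid this. Rather than $\Phi_i$ one studies the corrected function $e^{-\ii\sigma_i\lambda}\Phi_i(\lambda)$, which is the transform of an $L^2$ function supported on $[0,2\sigma_i]$ and is therefore automatically bounded on the closed lower half-plane -- that disposes of half the plane for free. For the upper half-plane one then uses $\Phi=-\mathsf{K}\Phi$ together with the symmetry $\Phi=\Phi^\star$ (which your sketch never invokes, even though it is exactly why the problem is posed on $X_\mathrm{sym}$) and the vertex identity $z_i=m_i-\sigma_ie^{\ii\alpha_i}$. Shifting the convexity estimate from the midpoint $m_i$ to the vertex $z_i$ tightens the range of $\arg\bigl(e^{-\ii\alpha_i}(m_j+\tau e^{\ii\alpha_j}-z_i)\bigr)$ to $[0,\pi-|\alpha_i-\alpha_{i-1}|]$, giving boundedness of $e^{-\ii\sigma_i\lambda}\Phi_i$ on the closed sector $\pi-|\alpha_i-\alpha_{i-1}|\le\arg\lambda\le\pi$. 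The uncovered sector $0<\arg\lambda<\pi-|\alpha_i-\alpha_{i-1}|$ now has opening strictly less than $\pi$, so Phragm\'en--Lindel\"of applies, Liouville gives $e^{-\ii\sigma_i\lambda}\Phi_i\equiv c$, and $c=0$ because $e^{\ii\sigma_i\lambda}\notin PW^{\sigma_i}_\mathrm{sym}$. Your plan is missing both the exponential correction factor and the symmetry/vertex manipulation, and without them the argument breaks at exactly the point you yourself flag as delicate.

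On existence: your appeal to Verchota is mathematically sound, but note that the paper presents the classical solvability result as a \emph{corollary} of the present analysis, so using it as an input would make the paper's programme circular. The paper instead cites its companion result for nonemptiness, and remarks that a self-contained route is available via the closed-range theorem and the adjoint $\mathsf{T}^*$.
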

\begin{proof}
First we prove that $\mathsf{DN}(\Phi)$ contains no more than one element, and this is equivalent to showing that $\mathsf{T}$ is injective on $X_\mathrm{sym}$. Let us assume $\mathsf{T}\Phi=0$ for some $\Phi\in X_\mathrm{sym}$. Since $\Phi_i \in PW^{\sigma_i}_\mathrm{sym}$ we know that $e^{-\ii\sigma_i \lambda} \Phi_i(\lambda)$ is bounded and analytic in the lower half plane, including along the rays $\arg \lambda = 0$ and $\arg\lambda=\pi$. Indeed, this follows from the the basic estimate
\[ |e^{-\ii\sigma_i \lambda}\Phi_i(\lambda)| =\left| \int_{-\sigma_i}^{\sigma_i} e^{-\ii(\sigma_i+\tau)\lambda} \hat{\Phi}_i(-\tau)\, \dd \tau\right| \lesssim \|\Phi_i\|_2 \]
for $\lambda$ in the lower half plane. Using the definition of $\mathsf{T}$, we must have $\Phi = -\mathsf{K}\Phi$. The $i$th component of this equation reads
\[ \Phi_i(\lambda) = -\frac{1}{2\pi} \sum_{j\neq i} \int_{-\sigma_j}^{\sigma_j} e^{\ii e^{-\ii\alpha_i}(m_i - m_j - \tau e^{\ii\alpha_j})\lambda} \hat{\Phi}_j(-\tau)\, \dd\tau. \]
Using the symmetry relation $\Phi(\lambda) = \Phi^\star(\lambda)$ we deduce
\[ \Phi_i(\lambda) = -\frac{1}{2\pi} \sum_{j\neq i} \int_{-\sigma_j}^{\sigma_j} e^{\ii e^{\ii\alpha_i}(\bar{m}_i - \bar{m}_j - \tau e^{-\ii\alpha_j})\lambda} \overline{\hat{\Phi}_j(-\tau)}\, \dd \tau. \]
Multiplying this by $e^{-\ii\sigma_i \lambda}$ and using that $z_i = m_i - \sigma_i e^{\ii\alpha_i}$ we find
\begin{eqnarray} \left| e^{-\ii\sigma_i \lambda} \Phi_i(\lambda) \right| &= \left| \frac{1}{2\pi} \sum_{j\neq i} \int_{-\sigma_j}^{\sigma_j} e^{\ii e^{\ii\alpha_i}(\bar{z}_i - \bar{m}_j - \tau e^{-\ii\alpha_j})\lambda} \overline{\hat{\Phi}_j(\tau)}\, \dd \tau \right| \label{tempest} \\
&= \left| \frac{1}{2\pi} \sum_{j\neq i} \int_{-\sigma_j}^{\sigma_j} e^{-\ii e^{-\ii\alpha_i}(z_i - m_j - \tau e^{\ii\alpha_j})\bar{\lambda}} \hat{\Phi}_j(\tau)\,  \dd \tau \right|. \nonumber
\end{eqnarray}
By convexity we have
\begin{equation} 0 \leq \arg \big( e^{-\ii \alpha_i}(m_j + \tau e^{\ii \alpha_j}-z_i) \big) \leq \pi -|\alpha_i - \alpha_{i-1}|, \label{convex} \end{equation}
so the right hand side \eref{tempest} is bounded for $-\pi \leq \arg \bar{\lambda} \leq |\alpha_i - \alpha_{i-1}| -\pi$, or equivalently
\[ \pi - |\alpha_i - \alpha_{i-1}| \leq \arg \lambda \leq \pi. \]
But since the left hand side of \eref{tempest} is also bounded along the ray $\arg\lambda =0$, we can use Phragm\'{e}n-Lindel\"{o}f to deduce that $e^{-\ii\sigma_i\lambda} \Phi_i(\lambda)$ is bounded in the upper half plane. Since we have already concluded that $e^{-\ii\sigma_i \lambda} \Phi_i(\lambda)$ is bounded in the lower half plane, it must be equal to a constant. This constant must be zero, however, since $e^{\ii\sigma_i\lambda}$ is not in $PW^{\sigma_i}_\mathrm{sym}$. So there can be at most one element in $\mathsf{DN}(\Phi)$. The fact that $\mathsf{DN}(\Phi)$ is non-empty follows from Theorem 1 in \cite{ashton2012rigorous} and its extensions.
\end{proof}
It is possible to prove that $\mathsf{DN}(\Phi)$ is non-empty through a direct argument by considering the equation $\mathsf{T}\Phi = \Lambda$ for $\Lambda\in Y$ and $\Phi\in X_\mathrm{sym}$. In Lemma \ref{bdbelow} below we show that $\mathsf{T}\in \mathcal{L}(X_\mathrm{sym}, Y)$ has closed range, so by Banach's closed range theorem it is sufficient to prove that $\Lambda =\ii \mathsf{T} \Psi \in N(\mathsf{T}^*)^\perp$ for $\Psi\in X_\mathrm{sym}$, where $\mathsf{T}^*\in \mathcal{L}(Y,X_\mathrm{sym})$ is the adjoint of $\mathsf{T}$. For economy of presentation we leave out the straightforward argument.

In light of the result in Lemma \ref{uniquelem}, we have a well-defined map
\[ \mathsf{DN}: X_\mathrm{sym} \rightarrow X_\mathrm{sym}: \Phi \mapsto \mathsf{DN}(\Phi). \]
This is the spectral Dirichlet-Neumann map. We have the following important theorem.
\begin{theorem}\label{dncts}
The spectral Dirichlet-Neumann map $\mathsf{DN}$ defines a continuous linear map from $X_\mathrm{sym}$ to itself.
\end{theorem}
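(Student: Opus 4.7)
The plan is to establish linearity from the uniqueness statement in Lemma \ref{uniquelem}, and then obtain continuity from the closed graph theorem, using the boundedness of $\mathsf{T}$ from Lemma \ref{lem2}.

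For linearity, fix $\Phi_1,\Phi_2\in X_\mathrm{sym}$ and real scalars $a,b$, and write $\Psi_k = \mathsf{DN}(\Phi_k)$, so that $\Phi_k + \ii\Psi_k \in N(\mathsf{T})$ by definition. Linearity of $\mathsf{T}$ gives $(a\Phi_1 + b\Phi_2) + \ii(a\Psi_1 + b\Psi_2) \in N(\mathsf{T})$, where both real and imaginary parts still lie in $X_\mathrm{sym}$. The uniqueness asserted in Lemma \ref{uniquelem} then forces $\mathsf{DN}(a\Phi_1 + b\Phi_2) = a\Psi_1 + b\Psi_2$. Note this is $\mathbf{R}$-linearity, which is what is needed: $X_\mathrm{sym}$ is only a real vector space, since the symmetry condition $f(\lambda) = \overline{f(-\overline{\lambda})}$ is broken by multiplication by $\ii$.

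For continuity, I would apply the closed graph theorem. Both the domain and codomain are real Banach spaces, as closed subspaces of $X$ by Lemma \ref{pwlem}, so it suffices to check that the graph
\[ G = \{(\Phi,\Psi) \in X_\mathrm{sym}\times X_\mathrm{sym} : \Phi+\ii\Psi \in N(\mathsf{T})\} \]
is closed. Since $\mathsf{T}:X\to Y$ is continuous by Lemma \ref{lem2}, the kernel $N(\mathsf{T})$ is a closed subspace of $X$; the assembly map $(\Phi,\Psi)\mapsto \Phi+\ii\Psi$ is clearly continuous from $X_\mathrm{sym}\times X_\mathrm{sym}$ into $X$; so $G$ is the preimage of a closed set under a continuous map, hence closed. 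Concretely, if $\Phi_n\to \Phi$ and $\mathsf{DN}(\Phi_n)\to \Psi$ in $X_\mathrm{sym}$, then $\Phi_n + \ii\,\mathsf{DN}(\Phi_n)\to \Phi+\ii\Psi$ in $X$; closedness of $N(\mathsf{T})$ places the limit in $N(\mathsf{T})$, and Lemma \ref{uniquelem} identifies $\Psi$ as $\mathsf{DN}(\Phi)$.

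There is no serious obstacle in this argument; the substantive work is already contained in Lemma \ref{lem2} (boundedness of $\mathsf{T}:X\to Y$) and Lemma \ref{uniquelem} (injectivity of $\mathsf{T}$ on $X_\mathrm{sym}$, established via the Phragm\'en--Lindel\"of principle). An alternative route would bypass the closed graph theorem and argue quantitatively: Lemma \ref{uniquelem} together with the closed range property of $\mathsf{T}\restriction_{X_\mathrm{sym}}$ (promised later as Lemma \ref{bdbelow}) would give a bounded inverse on the range by the open mapping theorem, whence $\mathsf{DN}(\Phi^\mathbf{t}) = (\mathsf{T}\restriction_{X_\mathrm{sym}})^{-1}(\ii\mathsf{T}\Phi^\mathbf{t})$ is manifestly bounded; but the closed graph route is shorter and uses only what has been proved so far.
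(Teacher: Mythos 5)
Your proof is correct and takes a genuinely different route from the paper's. The paper argues by contradiction, normalising a supposedly unbounded sequence to get $\Psi_m$ with $\|\Psi_m\|_X=1$ and $\|\mathsf{T}\Psi_m\|_Y\to 0$, then invoking Lemma \ref{bdbelow} (the lower bound $\|\mathsf{T}\Phi\|_Y\gtrsim\|\Phi\|_X$ on $X_\mathrm{sym}$) to reach a contradiction. That lemma is exactly the quantitative statement you flag as the alternative route; it rests on the closed-range Theorem \ref{closedrange}, which in turn depends on the pseudo-compactness Lemma \ref{normboundedlem} and Montel's theorem. Your closed graph argument sidesteps that entire chain: it needs only that $\mathsf{T}\in\mathcal{L}(X,Y)$ so that $N(\mathsf{T})$ is closed (Lemma \ref{lem2}), that $X_\mathrm{sym}$ is a real Banach space (Lemma \ref{pwlem}), and that $\mathsf{DN}$ is single-valued and linear (Lemma \ref{uniquelem}), and then one application of the closed graph theorem. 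Observing that $\ii X_\mathrm{sym}=X_\mathrm{asym}$, the assembly map $(\Phi,\Psi)\mapsto\Phi+\ii\Psi$ lands in $X=X_\mathrm{sym}\oplus X_\mathrm{asym}$ and is bounded, so the graph is indeed the preimage of the closed set $N(\mathsf{T})$. For the purposes of this theorem alone, your route is shorter and more elementary, at the cost of appealing to the closed graph theorem rather than exhibiting a bound. It does not, however, shorten the paper overall: Lemma \ref{bdbelow} and the surrounding machinery are needed independently for the coercivity estimate in Lemma \ref{coercivelem} and hence for the Lax--Milgram Theorem \ref{laxmilgram}, which require the quantitative lower bound $\|\mathsf{T}\Phi\|_Y\gtrsim\|\Phi\|_X$ rather than mere continuity of $\mathsf{DN}$.
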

We will need the following simple generalisation of Lemma \ref{normboundedlem}.
\begin{lemma}\label{montel}
Any norm-bounded sequence in $X_\mathrm{sym}$ contains a subsequence that converges pointwise and locally uniformly to an element of $X_\mathrm{sym}$ which obeys the same norm bound.
\end{lemma}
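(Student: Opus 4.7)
The plan is to reduce the statement to the component-wise version in Lemma \ref{normboundedlem} by iterated subsequence extraction, using that $X_{\mathrm{sym}}$ is the finite product of the Paley--Wiener spaces $PW^{\sigma_i}_{\mathrm{sym}}$.

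First I would unpack the norm. If $\{\Phi^{(m)}\}_{m\geq 1}$ is norm-bounded in $X_{\mathrm{sym}}$, say $\|\Phi^{(m)}\|_X \leq C$, then the definition of $\|\cdot\|_X$ forces each component sequence $\{\Phi_i^{(m)}\}_{m\geq 1}$ to be norm-bounded in $PW^{\sigma_i} \supset PW^{\sigma_i}_{\mathrm{sym}}$. Apply Lemma \ref{normboundedlem} to the first component to extract a subsequence along which $\Phi_1^{(m)}$ converges pointwise and locally uniformly to some $\Phi_1 \in PW^{\sigma_1}$. From that subsequence, extract a further subsequence along which the second component converges, and repeat. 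Since $n$ is finite, after $n$ extractions we obtain a single subsequence $\{\Phi^{(m_k)}\}_{k\geq 1}$ along which every component converges pointwise and locally uniformly to some limit $\Phi_i \in PW^{\sigma_i}$.

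Next I would check that the limit actually lies in $X_{\mathrm{sym}}$, not merely in $X$. By Lemma \ref{pwlem}, membership in $PW^{\sigma_i}_{\mathrm{sym}}$ is equivalent to the symmetry condition $\Phi_i(\lambda) = \overline{\Phi_i(-\overline{\lambda})}$ for all $\lambda \in \mathbf{C}$. This identity is preserved under pointwise limits: since each $\Phi_i^{(m_k)}$ satisfies it and $\Phi_i^{(m_k)}(\lambda) \to \Phi_i(\lambda)$ for every $\lambda$, the limit inherits the symmetry. Hence $\Phi \in X_{\mathrm{sym}}$.

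Finally I would transfer the norm bound using Fatou's lemma, exactly as at the end of the proof of Lemma \ref{normboundedlem}. Componentwise, $|\Phi_i^{(m_k)}(\lambda)|^2 \to |\Phi_i(\lambda)|^2$ pointwise, so
\[ \|\Phi_i\|_2^2 \leq \liminf_{k\to\infty} \|\Phi_i^{(m_k)}\|_2^2. \]
Summing over $i$ and using that $\sum_i \liminf a_i^{(k)} \leq \liminf \sum_i a_i^{(k)}$ for non-negative sequences,
\[ \|\Phi\|_X^2 = \sum_{i=1}^n \|\Phi_i\|_2^2 \leq \liminf_{k\to\infty} \sum_{i=1}^n \|\Phi_i^{(m_k)}\|_2^2 = \liminf_{k\to\infty} \|\Phi^{(m_k)}\|_X^2 \leq C^2, \]
which gives the desired norm bound. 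There is no genuine obstacle here; the only mildly delicate point is ensuring that the limit lies in the symmetric subspace $X_{\mathrm{sym}}$ rather than merely in $X$, and this is handled cleanly by the pointwise characterisation in Lemma \ref{pwlem}.
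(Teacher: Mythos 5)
Your proof follows the same iterated subsequence-extraction strategy as the paper, reducing the statement to Lemma~\ref{normboundedlem} component by component. You additionally spell out two details the paper's proof leaves implicit --- that the symmetry condition of Lemma~\ref{pwlem} is preserved under pointwise limits (so the limit actually lands in $X_{\mathrm{sym}}$ rather than merely $X$), and that Fatou's lemma applied at the $X$-level (using superadditivity of $\liminf$) is what delivers the $\|\cdot\|_X$ bound, since summing the componentwise bounds of Lemma~\ref{normboundedlem} alone would lose a factor --- and both points are correct and sharpen the argument.
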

\begin{proof}Given a norm bounded sequence $\{\Phi_m\}_{m\geq 1}$ in $X_\mathrm{sym}$, one first applies the the result of Lemma \ref{normboundedlem} to the first component of the sequence to get a subsequence $\{\Phi_{m_k}\}_{k\geq 1}$ whose first component has the desired property. With this subsequence, one then chooses a sub-subsequence for which the second component has the desired properties. Continuing inductively gives the required result. \end{proof}
\begin{lemma}\label{bdbelow}
The map $\mathsf{T}\in \mathcal{L}(X_\mathrm{sym},Y)$ is bounded below, i.e. $\|\mathsf{T}\Phi\|_Y \gtrsim \|\Phi\|_X$.
\end{lemma}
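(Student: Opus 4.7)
The plan is to combine the injectivity of $\mathsf{T}$ on $X_{\mathrm{sym}}$ with a closed-range property and then invoke the bounded inverse theorem. Injectivity is already in hand from the argument in the proof of Lemma \ref{uniquelem}: any $\Phi \in X_{\mathrm{sym}}$ with $\mathsf{T}\Phi = 0$ must vanish, by the Phragm\'en--Lindel\"of argument given there.

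For the closed-range property, I would re-run the proof of Theorem \ref{closedrange} essentially verbatim, with $X_{\mathrm{sym}}$ in place of $PW^\sigma$ and $Y$ in place of $L^p(\mathcal{X},\mu)$, using Lemma \ref{montel} wherever Lemma \ref{normboundedlem} was used. Concretely, let $X_\delta \subset X_{\mathrm{sym}}$ be norm-bounded and closed, and suppose $\mathsf{T}\Phi_n \to \Lambda$ in $Y$ for some $\Phi_n \in X_\delta$. After passing to a subsequence one may assume $\mathsf{T}\Phi_n \to \Lambda$ pointwise a.e.\ on $\mathbf{R}^-$; Lemma \ref{montel} then extracts a further subsequence $\Phi_{n_k}$ converging pointwise and locally uniformly to some $\Phi$ with $\|\Phi\|_X \leq \delta$. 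Since the symmetry condition $\Phi_i(\lambda) = \overline{\Phi_i(-\overline{\lambda})}$ is preserved under pointwise limits we have $\Phi \in X_{\mathrm{sym}}$, and closedness of $X_\delta$ gives $\Phi \in X_\delta$. The pointwise continuity of $\mathsf{T}$ from Lemma \ref{lem2} then yields $\mathsf{T}\Phi_{n_k} \to \mathsf{T}\Phi$ pointwise, so $\Lambda = \mathsf{T}\Phi \in \mathsf{T}(X_\delta)$, and the range is closed.

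With injectivity and closed range, $\mathsf{T}: X_{\mathrm{sym}} \to \mathsf{T}(X_{\mathrm{sym}})$ is a continuous bijection between Banach spaces, since $X_{\mathrm{sym}}$ is closed in $X$ (Lemma \ref{pwlem}) and $\mathsf{T}(X_{\mathrm{sym}})$ is closed in $Y$. The bounded inverse theorem then produces a constant $C > 0$ with $\|\Phi\|_X \leq C \|\mathsf{T}\Phi\|_Y$ for every $\Phi \in X_{\mathrm{sym}}$, which is exactly the desired estimate.

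The main obstacle is the closed-range step; once Theorem \ref{closedrange} is adapted to the product setting the rest is routine functional analysis. A cautionary point flagged by the remark following Lemma \ref{normboundedlem} is that the Montel-type compactness provided by Lemma \ref{montel} is genuinely weaker than norm convergence, so a direct contradiction argument that normalises $\|\Phi_n\|_X = 1$ and $\|\mathsf{T}\Phi_n\|_Y \to 0$ would collapse to $\Phi = 0$ without reaching a contradiction; the closed-range input is what makes the bound below work.
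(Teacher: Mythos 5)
Your argument is exactly the one the paper gives: injectivity from the Phragm\'en--Lindel\"of step in Lemma \ref{uniquelem}, closed range by adapting Theorem \ref{closedrange} to the product setting (with Lemma \ref{montel} replacing Lemma \ref{normboundedlem}), and then the bounded inverse theorem to get the lower bound. The cautionary remark at the end is also well taken and accurately explains why the closed-range detour is needed rather than a naive normalisation-and-Montel contradiction.
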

\begin{proof}
We know that $\mathsf{T}\in \mathcal{L}(X_\mathrm{sym},Y)$ and that $\mathsf{T}$ is continuous with respect to the topology of pointwise convergence. By a simple generalisation of Theorem \ref{closedrange} we find that $\mathsf{T}$ has closed range. Paired with the fact that $\mathsf{T}$ is injective on $X_\mathrm{sym}$, it follows\footnote{Since $R(\mathsf{T})\subset Y$ is closed it is also a Banach space, so $\mathsf{T}:X_\mathrm{sym}\rightarrow R(\mathsf{T})$ is a bijection between Banach spaces. The relevant estimate now follows from Banach's bounded inverse theorem.} that $\|\mathsf{T}\Phi\|_Y \gtrsim \|\Phi\|_X$ for $\Phi \in X_\mathrm{sym}$.
\end{proof}
\begin{proof}[Proof of Theorem \ref{dncts}]
Linearity is straightforward so we focus on continuity. Let us proceed by contradiction. Suppose that the map is not continuous, i.e. it is unbounded. Then there exists a sequence $\{\Phi_m\}_{m\geq 1}$ in $X_\mathrm{sym}$ such that $\| \Phi_m \|_X=1$ for each $m$ but $\| \mathsf{DN}(\Phi_m)\|_X \rightarrow \infty$. By definition we have
\[ \mathsf{T}[\mathsf{DN}(\Phi_m)]=\ii\mathsf{T} [\Phi_m]  \]
for each $m\geq 1$. Introduce the new sequences
\[ \Psi_m = \frac{\mathsf{DN}(\Phi_m)}{\| \mathsf{DN}(\Phi_m)\|_X},\quad \Lambda_m= \frac{\Phi_m}{\| \mathsf{DN}(\Phi_m)\|_X} \]
so $\|\Psi_m\|_X=1$ for each $m$ and $\|\Lambda_m\|_X\rightarrow 0$. By Lemma \ref{lem2} we know that $\mathsf{T}:X_\mathrm{sym} \rightarrow Y$ is continuous so we deduce $\| \mathsf{T}\Psi_m\|_Y \rightarrow 0$. Lemma \ref{bdbelow} gives
\[ 1 = \|\Psi_m \|_X \lesssim \|\mathsf{T} \Psi_m\|_Y \rightarrow 0 \]
which provides us with our contradiction. So there is some constant such that $\|\mathsf{DN}(\Phi)\|_X \lesssim \|\Phi\|_X$ for all $\Phi \in X_\mathrm{sym}$.
\end{proof}
By multiplying the global relation by the imaginary unit $\ii$, essentially reversing the r\^oles of the real and imaginary parts of $X$, we obtain the following.
\begin{corollary}
The map $\mathsf{DN}:X_\mathrm{sym} \rightarrow X_\mathrm{sym}$ is a homeomorphism.
\end{corollary}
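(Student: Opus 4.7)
The plan is to exploit the invariance of the null space $N(\mathsf{T})$ under multiplication by the scalar $\ii$ — the observation parenthetically advertised just before the corollary — to derive the algebraic identity $\mathsf{DN}^2 = -\mathsf{I}$. Once that identity is in hand, bijectivity of $\mathsf{DN}$ and continuity of its inverse follow immediately from Theorem \ref{dncts}.

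In more detail, I would proceed in three short steps. First, unpack the definition: by construction and Lemma \ref{uniquelem}, $\mathsf{DN}(\Phi^\mathbf{t}) = \Phi^\mathbf{n}$ precisely when $\Phi^\mathbf{t} + \ii \Phi^\mathbf{n} \in N(\mathsf{T})$. Second, because $\mathsf{T}$ is $\mathbf{C}$-linear, $N(\mathsf{T})$ is stable under multiplication by $\ii$, so
\[ \ii(\Phi^\mathbf{t} + \ii \Phi^\mathbf{n}) = (-\Phi^\mathbf{n}) + \ii\, \Phi^\mathbf{t} \;\in\; N(\mathsf{T}). \]
Both $-\Phi^\mathbf{n}$ and $\Phi^\mathbf{t}$ lie in $X_\mathrm{sym}$, so by the uniqueness in Lemma \ref{uniquelem} this reads $\mathsf{DN}(-\Phi^\mathbf{n}) = \Phi^\mathbf{t}$. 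Invoking linearity of $\mathsf{DN}$ (granted by Theorem \ref{dncts}), this rearranges to $\mathsf{DN}(\mathsf{DN}(\Phi^\mathbf{t})) = -\Phi^\mathbf{t}$ for every $\Phi^\mathbf{t} \in X_\mathrm{sym}$, i.e.\ $\mathsf{DN}^2 = -\mathsf{I}$.

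Third, from $\mathsf{DN}^2 = -\mathsf{I}$ I immediately read off both bijectivity of $\mathsf{DN}$ and the explicit formula $\mathsf{DN}^{-1} = -\mathsf{DN}$. Since Theorem \ref{dncts} already provides continuity of $\mathsf{DN}$, the inverse $-\mathsf{DN}$ is likewise continuous, and a continuous linear bijection between Banach spaces with continuous inverse is by definition a homeomorphism.

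I do not anticipate a real obstacle here, since the heart of the argument is just the identity $\mathsf{DN}^2 = -\mathsf{I}$ extracted from the $\mathbf{C}$-linearity of $\mathsf{T}$. The only point that bears checking is that the rotated element $(-\Phi^\mathbf{n}) + \ii\,\Phi^\mathbf{t}$ still decomposes with both real and imaginary components in $X_\mathrm{sym}$ — but this is automatic from $\Phi^\mathbf{t}, \Phi^\mathbf{n} \in X_\mathrm{sym}$, so that Lemma \ref{uniquelem} applies directly and no additional analytical work is required beyond what has been done for Theorem \ref{dncts}.
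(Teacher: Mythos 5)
Your argument is correct and is precisely the argument the paper hints at: multiplying the global relation by $\ii$ swaps the roles of the real (Dirichlet) and imaginary (Neumann) components of an element of $N(\mathsf{T})$, which you formalize as the identity $\mathsf{DN}^2 = -\mathsf{I}$ and hence $\mathsf{DN}^{-1} = -\mathsf{DN}$, continuous by Theorem~\ref{dncts}. The paper gives only a one-line indication of this, so your write-up is a faithful and complete elaboration of the intended proof.
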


\section{Towards a New Numerical Approach}\label{numerical}
Here we give a brief outline of how the previous theoretical results can be used to produce a new approach to the numerical study of the solutions to boundary value problems associated with Laplace's equation on the interior of a convex polygon. In particular, we demonstrate that this approach provides a whole family of potential numerical schemes that can be used to solved the global relation to any required degree of accuracy. A more detailed numerical study will be pursued elsewhere.

The natural starting point is to rephrase the global relation in terms of a weak variational problem. Given $\Phi^\mathbf{t}\in X_\mathrm{sym}$, is it enough to find some $\Phi \in X_\mathrm{sym}$ such that
\[ \mathsf{T}( \Phi - \ii \Phi^\mathbf{t})(\lambda) =0, \qquad \lambda \in D \]
where $D$ is a subset of $\mathbf{C}$ which contains an accumulation point. This follows from a simple analytic continuation argument. More generally, we could study the equations
\begin{equation} \mathsf{T}(\Phi - \ii \Phi^\mathbf{t})_i(\lambda) =0, \quad \lambda \in D_i, \quad i=1,\ldots, n \label{gr4} \end{equation}
where each $D_i\subset \mathbf{C}$, $1\leq i \leq n$, contains an accumulation point. Motivated by these observations we seek to minimize the functional
\[ I[\Phi] = \sum_{i=1}^n\int_{\gamma_i} | \mathsf{T}(\Phi - \ii \Phi^\mathbf{t})_i(\lambda)|^2\, \dd s(\lambda) \]
where $\gamma_i$, $1\leq i \leq n$, are curves in $\mathbf{C}$ and $\dd s(\lambda)$ is the natural Lebesgue measure of arc-length on these curves. We impose that the curves $\gamma_i$ are locally finite and semi-infinite, and each eventually coincides with the negative real axis. If we define the bilinear and linear forms on $X_\mathrm{sym}$ by
\[ a(\Phi, \Psi) = \Re \sum_{i=1}^n \int_{\gamma_i} (\mathsf{T}\Phi)_i(\lambda) \overline{ (\mathsf{T}\Psi)_i(\lambda)}\, \dd s(\lambda), \]
\[ \ell (\Psi) = -\Im \sum_{i=1}^n \int_{\gamma_i} (\mathsf{T} \Phi^\mathbf{t})_i(\lambda) \overline{ (\mathsf{T} \Psi)_i(\lambda)}\, \dd s(\lambda), \]
a standard calculus of variations argument leads us to the following weak form of \eref{gr4}.
\begin{lemma}
$\Phi \in X_\mathrm{sym}$ is a minimizer for $I[\Phi]$ if and only if
\begin{equation} a(\Phi, \Psi) = \ell (\Psi) \qquad \forall \Psi \in X_{\mathrm{sym}}. \label{weak}\end{equation}
\end{lemma}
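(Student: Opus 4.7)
The plan is a standard first-variation argument for a quadratic functional, which is particularly clean here because everything is real-bilinear in the data. The strategy is to expand $I[\Phi + t\Psi]$ as a polynomial in $t \in \mathbf{R}$ (with $\Phi, \Psi \in X_\mathrm{sym}$), identify the coefficients, and read off the Euler--Lagrange condition in both directions.

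First I would exploit the elementary identity $|u - \ii v|^2 = |u|^2 + |v|^2 - 2\,\Im(u\bar v)$ applied pointwise with $u = (\mathsf{T}(\Phi + t\Psi))_i$ and $v = (\mathsf{T}\Phi^{\mathbf{t}})_i$. Since $t$ is real and $\mathsf{T}$ is complex-linear, expanding $|(\mathsf{T}\Phi)_i + t(\mathsf{T}\Psi)_i|^2$ is straightforward, and the cross term $\Im[((\mathsf{T}\Phi)_i + t(\mathsf{T}\Psi)_i)\overline{(\mathsf{T}\Phi^{\mathbf{t}})_i}]$ is linear in $t$. Integrating over each $\gamma_i$ and summing yields the explicit expansion
\[
I[\Phi + t\Psi] = I[\Phi] + 2t\bigl[a(\Phi,\Psi) - \ell(\Psi)\bigr] + t^2\,a(\Psi,\Psi),
\]
after recognising that the $t$-independent and $t^2$ pieces are $I[\Phi]$ and $a(\Psi,\Psi)$ respectively, and that the linear-in-$t$ cross term assembles into $a(\Phi,\Psi) - \ell(\Psi)$ (here the minus sign in the definition of $\ell$ is matched by the sign coming from $-2\,\Im(u\bar v)$). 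It is worth briefly noting that all integrals converge: each $\gamma_i$ is locally finite and agrees with $\mathbf{R}^-$ outside a compact set, so that on the unbounded tail $(\mathsf{T}\Phi)_i$ lies in $L^2$ by Lemma \ref{lem2}, while on the compact portion the Paley--Wiener pointwise bound $|(\mathsf{T}\Phi)_i(\lambda)| \lesssim e^{\sigma|\lambda|}\|\Phi\|_X$ ensures integrability.

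With the expansion in hand, both directions are immediate. If $\Phi$ is a minimizer, then $t\mapsto I[\Phi + t\Psi]$ attains its minimum at $t=0$, so the linear coefficient must vanish: $a(\Phi,\Psi) = \ell(\Psi)$ for every $\Psi \in X_\mathrm{sym}$. Conversely, if this identity holds for every $\Psi$, then setting $t = 1$ and replacing $\Psi$ by $\Psi - \Phi$ (or equivalently, considering arbitrary perturbations) gives
\[
I[\Phi + \Psi] = I[\Phi] + a(\Psi,\Psi) \geq I[\Phi],
\]
since $a(\Psi,\Psi) = \sum_i \int_{\gamma_i} |(\mathsf{T}\Psi)_i|^2\,\dd s(\lambda) \geq 0$. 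Hence $\Phi$ is a minimizer.

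There is really no serious obstacle here; the only thing worth being careful about is the bookkeeping of signs and the fact that one works over the reals (i.e.\ $t \in \mathbf{R}$ and $\Psi \in X_\mathrm{sym}$), so that the bilinear form $a$ genuinely behaves as a real inner-product pairing rather than a sesquilinear one. The coercivity and well-posedness of the weak problem \eref{weak} are separate issues, addressed elsewhere in the paper; this lemma is purely the variational characterisation.
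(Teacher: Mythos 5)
Your proof is correct and is precisely the ``standard calculus of variations argument'' the paper invokes without writing out: expand $I[\Phi + t\Psi]$ in $t\in\mathbf{R}$, obtain $I[\Phi] + 2t\bigl[a(\Phi,\Psi)-\ell(\Psi)\bigr] + t^2\,a(\Psi,\Psi)$, kill the linear term for necessity, and use $a(\Psi,\Psi)\geq 0$ for sufficiency. The sign bookkeeping (via $|u-\ii v|^2 = |u|^2+|v|^2-2\,\Im(u\bar v)$ and $\Im(w\bar v)=-\Im(v\bar w)$) and the convergence remarks via Lemma \ref{lem2} and the Paley--Wiener bound are all accurate, so this fills in the omitted details faithfully.
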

\noindent
Note that the linearity of $\ell$ and bilinearity of $a$ follow from the fact that $X_\mathrm{sym}$ is a \emph{real} vector space. We need the following results to apply the standard machinery.
\begin{lemma}\label{coercivelem}
The bilinear form $a: X_\mathrm{sym} \times X_\mathrm{sym}$ is bounded and coercive
\[ \mathrm{(i)}\,\, |a(\Phi,\Psi)| \lesssim \| \Phi \|_X \| \Psi\|_X, \quad \mathrm{(ii)}\,\, a(\Phi,\Phi) \gtrsim \| \Phi \|^2_X \]
and $\ell \in X_\mathrm{sym}^*$, i.e. $|\ell(\Psi)| \lesssim \|\Psi\|_X$.
\end{lemma}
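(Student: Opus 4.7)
The plan is to realise the bilinear form as an $L^2$ inner product on the union of the curves, namely $a(\Phi,\Psi) = \Re \langle \mathsf{T}_\gamma \Phi, \mathsf{T}_\gamma \Psi\rangle_{L^2(\gamma)}$, where $\mathsf{T}_\gamma : X_\mathrm{sym} \to L^2(\gamma) := L^2(\gamma_1,\dd s)\oplus\cdots\oplus L^2(\gamma_n,\dd s)$ is the restriction of the components of $\mathsf{T}\Phi$ to the curves. Then (i) and the bound on $\ell$ will follow from continuity of $\mathsf{T}_\gamma$ via Cauchy--Schwarz, and (ii) is exactly the statement that $\mathsf{T}_\gamma$ is bounded below. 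Boundedness below in turn will be obtained along the same lines as Lemma \ref{bdbelow}, by combining injectivity of $\mathsf{T}_\gamma$ with a closed-range result in the spirit of Theorem \ref{closedrange}.

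For continuity of $\mathsf{T}_\gamma$, I would split each curve as $\gamma_i = \gamma_i^c \cup (-\infty,-R_i]$ with $\gamma_i^c$ a bounded (hence finite arc-length) arc and the tail lying in $\mathbf{R}^-$. On the tail, Lemma \ref{lem2} gives $\int_{-\infty}^{-R_i} |(\mathsf{T}\Phi)_i|^2\,\dd \lambda \leq \|\mathsf{T}\Phi\|_Y^2 \lesssim \|\Phi\|_X^2$. On $\gamma_i^c$, the definition of $\mathsf{K}$ paired with the Paley--Wiener inequality yields a pointwise bound $|(\mathsf{T}\Phi)_i(\lambda)| \leq C_{\gamma_i^c} \|\Phi\|_X$, so integrating produces $\|(\mathsf{T}\Phi)_i\|_{L^2(\gamma_i^c)} \lesssim \|\Phi\|_X$. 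Summing gives $\|\mathsf{T}_\gamma \Phi\|_{L^2(\gamma)} \lesssim \|\Phi\|_X$, and Cauchy--Schwarz then delivers both $|a(\Phi,\Psi)| \lesssim \|\Phi\|_X\|\Psi\|_X$ and $|\ell(\Psi)| \lesssim \|\Phi^\mathbf{t}\|_X \|\Psi\|_X \lesssim \|\Psi\|_X$, with the implicit constant in the latter bound depending on the (fixed) datum.

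For coercivity, note $a(\Phi,\Phi) = \|\mathsf{T}_\gamma \Phi\|_{L^2(\gamma)}^2$ since the integrand is already real. Injectivity of $\mathsf{T}_\gamma$ is a quick consequence of the identity theorem: if $\mathsf{T}_\gamma \Phi = 0$ then each entire function $(\mathsf{T}\Phi)_i$ vanishes along the interval $(-\infty,-R_i] \subset \gamma_i$, hence $(\mathsf{T}\Phi)_i \equiv 0$ on all of $\mathbf{C}$; the injectivity of $\mathsf{T}$ on $X_\mathrm{sym}$ established in the proof of Lemma \ref{uniquelem} then forces $\Phi = 0$. Closed range follows from the natural extension of Theorem \ref{closedrange} to the measure space $\bigsqcup_i \gamma_i$ equipped with arc-length measure, which applies since $\mathsf{T}_\gamma$ is continuous (just shown) and respects pointwise convergence (inherited directly from $\mathsf{T}$). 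Once $\mathsf{T}_\gamma$ is known to be bounded, injective and of closed range, the footnote to Lemma \ref{bdbelow} --- Banach's bounded inverse theorem applied to $\mathsf{T}_\gamma : X_\mathrm{sym} \to R(\mathsf{T}_\gamma)$ --- delivers the coercivity estimate $\|\mathsf{T}_\gamma \Phi\|_{L^2(\gamma)} \gtrsim \|\Phi\|_X$.

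The only genuine obstacle is the closed-range step, which requires Theorem \ref{closedrange} in the curve-valued target setting. However, the original proof used only the abstract measure-space structure of the target together with the pseudo-compactness furnished by Lemma \ref{normboundedlem}, both of which carry over verbatim (with Lemma \ref{montel} replacing Lemma \ref{normboundedlem} to accommodate the product space $X_\mathrm{sym}$), so the extension is really just bookkeeping. This is also the step that requires each $\gamma_i$ to contain an accumulation set --- automatic under the standing hypothesis that the tails coincide with $\mathbf{R}^-$ --- which is precisely the same structural condition that made the weak formulation \eref{gr4} equivalent to the original global relation in the first place.
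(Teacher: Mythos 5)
Your treatment of boundedness (i) and of the bound on $\ell$ is essentially the paper's: split each $\gamma_i$ into a bounded arc and a tail along $\mathbf{R}^-$, control the arc via the Paley--Wiener pointwise bound and the tail via $\mathsf{T}\in\mathcal{L}(X_\mathrm{sym},Y)$, then finish with Cauchy--Schwarz.

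For coercivity (ii), however, your route is genuinely different from the paper's. The paper argues directly by contradiction: it assumes $a(\Phi_m,\Phi_m)\to 0$ with $\|\Phi_m\|_X=1$, extracts a locally uniformly convergent subsequence via Lemma~\ref{montel}, shows the limit vanishes by analytic continuation and injectivity of $\mathsf{T}$, and then uses the locally uniform decay of $(\mathsf{T}\Phi_m)_i$ on the ball $B_R$ to compare $\int_{\gamma_i}$ against $\int_{\mathbf{R}^-}$ up to an arbitrarily small error; this lets it fall back on the already-proved bound $\|\mathsf{T}\Phi_m\|_Y\gtrsim\|\Phi_m\|_X$ from Lemma~\ref{bdbelow} to obtain a contradiction. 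You instead introduce the operator $\mathsf{T}_\gamma:X_\mathrm{sym}\to L^2(\gamma)$, observe $a(\Phi,\Phi)=\|\mathsf{T}_\gamma\Phi\|^2_{L^2(\gamma)}$, and then rerun the full closed-range pipeline for $\mathsf{T}_\gamma$: injectivity by the identity theorem, closed range by the $X_\mathrm{sym}$-valued generalisation of Theorem~\ref{closedrange} (with Lemma~\ref{montel} replacing Lemma~\ref{normboundedlem}, and $\mathcal{X}=\bigsqcup_i\gamma_i$ with arc-length measure), and boundedness below via Banach's bounded inverse theorem. Both arguments are correct and rely on the same underlying ingredients (Montel-type pseudo-compactness, injectivity of $\mathsf{T}$ on $X_\mathrm{sym}$, the Hardy inequality hidden inside Lemma~\ref{lem2}); the paper's version reuses Lemma~\ref{bdbelow} as a black box at the cost of a delicate comparison between the $\gamma_i$-integrals and the $\mathbf{R}^-$-integrals, whereas yours avoids that comparison entirely by proving the boundedness-below statement in the form directly required, at the cost of re-verifying the hypotheses of the closed-range theorem for the new target space. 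Your version is arguably cleaner and more modular; the paper's has the merit of not duplicating the closed-range machinery.
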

\begin{proof}
To show that $a$ is bounded we first apply Cauchy-Schwarz
\begin{eqnarray*} |a(\Phi,\Psi)| &\leq \sum_{i=1}^n \int_{\gamma_i} |(\mathsf{T}\Phi)_i(\lambda)| | (\mathsf{T}\Psi)_i(\lambda) |\, \dd s(\lambda) \\
&\leq \sum_{i=1}^n \left( \int_{\gamma_i} |(\mathsf{T}\Phi)_i(\lambda)|^2\, \dd s(\lambda)\right)^{1/2} \left( \int_{\gamma_i} |(\mathsf{T}\Psi)_i(\lambda)|^2\, \dd s(\lambda)\right)^{1/2}.
\end{eqnarray*}
It is now enough to show that for $\Phi \in X_{\mathrm{sym}}$
\[ \int_{\gamma_i} | (\mathsf{T}\Phi)_i(\lambda)|^2\, \dd s(\lambda) \lesssim \| \Phi \|^2_X, \quad 1\leq i \leq n. \]
Choose some $R>0$ sufficiently large so that outside the ball $B_R = \{\lambda\in \mathbf{C}: |\lambda|<R\}$ all of the contours $\{\gamma_i\}_{i=1}^n$ coincide with the negative real axis. We deal with the contributions from $\lambda \in \gamma_i \cap B_R$ and $\lambda \in (-\infty,-R)$ separately. We have
\begin{eqnarray*} \left| (\mathsf{T}\Phi)_i(\lambda)\right|^2 &= \left| \Phi_i(\lambda) + \sum_{j\neq i} e^{\ii e^{-\ii\alpha_i} (m_i - m_j)\lambda} \Phi_j(e^{-\ii \Delta_{ij}} \lambda) \right|^2 \\
&\lesssim |\Phi_i(\lambda)|^2 + \sum_{j\neq i} \left| e^{\ii e^{-\ii\alpha_i} (m_i - m_j)\lambda} \Phi_j(e^{-\ii \Delta_{ij}} \lambda) \right|^2.
\end{eqnarray*}
The supremum of these terms on $\gamma_i \cap B_R$ can be estimated using the standard Paley-Wiener inequality. Using the fact that the length of each $\gamma_i$ contained in this region is finite, we arrive at
\[ \int_{\gamma_i\cap B_R } |(\mathsf{T}\Phi)_i(\lambda)|^2\dd s(\lambda) \lesssim_\gamma \|\Phi\|^2_X. \]
The contribution from $(-\infty,-R)$ is easily estimated in terms of $\|\Phi\|_X$ by using the fact $\mathsf{T} \in \mathcal{L}(X_\mathrm{sym},Y)$, so the claim in $(\ii)$ is proven. For coercivity we note that if $\|\Phi\|_X=1$ then it must be the case that
\begin{equation} \sum_{i=1}^n \int_{\gamma_i} |(\mathsf{T}\Phi)_i(\lambda)|^2\, \dd s(\lambda) \gtrsim_\gamma 1. \label{coerciveest} \end{equation}
Indeed, if this were not true then there would one could construct a sequence $\{\Phi_m\}_{m\geq 1}$ with $\|\Phi_m\|_X=1$ such that
\[ \sum_{i=1}^n \int_{\gamma_i} |(\mathsf{T}\Phi_m)_i(\lambda)|^2\, \dd s(\lambda) \leq \frac{1}{m}. \]
Passing to a subsequence if necessary, we have $\Phi_m \rightarrow \Phi'$ locally uniformly for some $\Phi'\in X_\mathrm{sym}$ for which $a(\Phi',\Phi')=0$, i.e. $(\mathsf{T}\Phi')_i(\lambda)=0$ for $\lambda\in \gamma_i$. By analytic continuation it follows that $(\mathsf{T}\Phi')(\lambda)=0$ for $\lambda \in \mathbf{C}$, hence $\Phi'=0$ by the injectivity of $\mathsf{T}$ on $X_\mathrm{sym}$. We deduce that the sequence $\{\Phi_m\}_{m\geq 1}$ converges to zero locally uniformly. In particular, for the fixed $R>0$ used earlier and for any given $\epsilon>0$ we can take $m$ sufficiently large so that
\[ \left|\sum_{i=1}^n \left( \int_{\gamma_i \cap B_R} - \int_{-R}^0 \right) |(\mathsf{T}\Phi_m)_i(\lambda)|^2\dd s(\lambda) \right| < \epsilon. \]
So for $m$ sufficiently large we have the estimate
\[ a(\Phi_m,\Phi_m) \geq \| \mathsf{T}\Phi_m\|_Y ^2 - \epsilon. \]
By Lemma \ref{bdbelow}, there is some $c>0$ such that $\| \mathsf{T}\Phi\|_Y \geq \sqrt{2c}\|\Phi\|_X$ for all $\Phi\in X_\mathrm{sym}$. Setting $\epsilon = c$ and choosing $m$ sufficiently large we find
\[ a(\Phi_m,\Phi_m) \geq c.\]
This contradicts our assumption that $a(\Phi_m,\Phi_m)\rightarrow 0$, so the estimate in \eref{coerciveest} must hold. Coercivity $\mathrm{(ii)}$ follows directly from \eref{coerciveest}
\[ a(\Phi,\Phi) = \|\Phi\|^2_X \sum_{i=1}^n \int_{\gamma_i} |(\mathsf{T}(\Phi/\|\Phi\|_X))_i(\lambda)|^2\, \dd s(\lambda) \gtrsim_\gamma \| \Phi\|^2_X. \]
That $\ell$ defines a bounded linear map on $X_\mathrm{sym}$ follows from arguments similar to those used to prove estimate $(\mathrm{i})$.
\end{proof}
An application of the Lax-Milgram lemma gives.
\begin{theorem}\label{laxmilgram}
There is a unique solution in $X_\mathrm{sym}$ to the weak problem \eref{weak}.
\end{theorem}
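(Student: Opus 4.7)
The plan is to verify the hypotheses of the Lax--Milgram lemma and apply it directly to the bilinear form $a$ and the linear functional $\ell$ on the real Hilbert space $X_\mathrm{sym}$.

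First I would fix the underlying Hilbert space structure. As noted earlier in the paper, $X_\mathrm{sym}$ is a closed subspace of $L^2(\mathbf{R})^{\times n}$, and when viewed as a real vector space it inherits the real inner product $(\Phi,\Psi) = \Re\sum_{i=1}^{n}\int_{-\infty}^{\infty}\Phi_i(\lambda)\overline{\Psi_i(\lambda)}\,\dd\lambda$, which induces precisely the norm $\|\cdot\|_X$. Thus $(X_\mathrm{sym},(\cdot,\cdot))$ is a real Hilbert space, which is the appropriate setting since $a$ and $\ell$ are only real-bilinear and real-linear, respectively.

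Next, Lemma \ref{coercivelem} supplies exactly the three ingredients needed. From part~(i) of that lemma, $a:X_\mathrm{sym}\times X_\mathrm{sym}\to \mathbf{R}$ is bounded, $|a(\Phi,\Psi)|\lesssim \|\Phi\|_X\|\Psi\|_X$. From part~(ii), $a$ is coercive, $a(\Phi,\Phi)\gtrsim \|\Phi\|_X^2$. The bilinearity of $a$ is immediate from its definition (and uses crucially that $X_\mathrm{sym}$ is regarded as a real vector space, so that one may take real and imaginary parts under the bilinear pairing). Finally, the same lemma shows $\ell\in X_\mathrm{sym}^*$ with $|\ell(\Psi)|\lesssim \|\Psi\|_X$.

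With these properties in hand, the Lax--Milgram lemma applies verbatim and delivers a unique $\Phi\in X_\mathrm{sym}$ such that $a(\Phi,\Psi)=\ell(\Psi)$ for every $\Psi\in X_\mathrm{sym}$, which is the desired conclusion. I do not anticipate any genuine obstacle in this argument: the serious analytic work (continuity of $\mathsf{T}$, closed range and boundedness below, injectivity on $X_\mathrm{sym}$) has already been carried out in Lemmas \ref{lem2}, \ref{uniquelem}, \ref{bdbelow}, and \ref{coercivelem}. The only minor subtlety to flag is that $a$ is symmetric, so one could equivalently invoke the Riesz representation theorem applied to the equivalent inner product $(\Phi,\Psi)_a := a(\Phi,\Psi)$ on $X_\mathrm{sym}$; this gives the same conclusion and makes the uniqueness entirely transparent.
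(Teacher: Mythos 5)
Your proposal matches the paper's argument: the paper also obtains Theorem \ref{laxmilgram} as a direct application of the Lax--Milgram lemma, with all the required boundedness, coercivity, and continuity properties supplied by Lemma \ref{coercivelem}. Your additional remark about invoking Riesz representation via the equivalent inner product $(\Phi,\Psi)_a := a(\Phi,\Psi)$ is a valid and slightly more elementary alternative, but does not change the substance.
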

\begin{remark}
The numerical implementation of the Fokas method has not, to date, used a weak approach. The standard approaches, e.g. \cite{fornberg2011numerical,fulton2004analytical,sifalakis2008generalized,smitheman2010spectral}, have approximated the unknown boundary boundary data $\{\varphi^\mathbf{n}\}_{i=1}^n$ using suitable basis functions $\{\vartheta_m\}_{m=1}^\infty$ so that
\[ \varphi_i^\mathbf{n}(\tau) \approx \sum_{m=1}^N c_{im} \vartheta_m(\tau) \quad 1\leq i \leq n\]
for some $N\gg 1$. This is used to approximate the unknown spectral function $\Phi^\mathbf{n}(\lambda)$. Using this approximation one evaluates the global relation \eref{gra} at a sequence of points in the complex plane to get linear problem for the unknown coefficients $\{c_{im}\}$. However, these results have been formal in nature -- to this authors knowledge no proofs of convergence or stability have been given.

To make rigorous these pointwise approaches it seems the semi-norm estimate
\begin{equation} \| \mathsf{T} \Phi \|_{D,\infty} \gtrsim_D \| \Phi\|_{D,\infty} , \quad \Phi\in X_\mathrm{sym}, \label{ptwise}\end{equation}
for each open $D\subset\mathbf{C}$ is most relevant, where $\| \Phi\|_{D,\infty} = \max_i \sup_D |\Phi_i(\lambda)|$. To see this estimate first note the following
\[ \| \mathsf{T} \Phi \|_{D,\infty} \gtrsim_D \| \Phi\|_{D,\infty}, \quad \Phi\in X_\mathrm{sym}\,\, \mathrm{and}\,\, \|\Phi\|_X=1. \]
Indeed, if it were not true then one could take a sequence $\{\Phi_m\}_{m\geq 1}$ with $\|\Phi_m\|_{D,\infty} = \|\Phi_m\|_X=1$ and $\|\mathsf{T}\Phi_m\|_{D,\infty} \rightarrow 0$. Passing to a subsequence if necessary, we find $\Phi_m\rightarrow \Phi$ locally uniformly with $\mathsf{T}\Phi=0$ on $D$ and on all of $\mathbf{C}$ by analytic continuation. Again using the injectivity of $\mathsf{T}$ on $X_\mathrm{sym}$ we deduce that $\Phi_m\rightarrow 0$ locally uniformly, contradicting our assumption that $\|\Phi_m\|_{D,\infty}=1$. So for any $\Phi \in X_\mathrm{sym}$ we have
\[ \fl\qquad\quad \| \mathsf{T}\Phi\|_{D,\infty} = \|\Phi\|_X \|\mathsf{T}(\Phi/\|\Phi\|_X)\|_{D,\infty} \gtrsim_D \| \Phi\|_X \|(\Phi/\|\Phi\|_X)\|_{D,\infty} = \|\Phi\|_{D,\infty}, \]
which is the estimate in \eref{ptwise}. We note that the open set $D$ could be replaced with any set containing an accumulation point.
\end{remark}

Using the result of Theorem \ref{laxmilgram} we can now apply standard Galerkin techniques to solve a sequence of finite dimensional problems whose solution approximates the true solution to \eref{weak}, the error in which is controlled by C\'ea's lemma \cite{ern2004theory}. A practical implementation of this can be achieved as follows. We write
\[ \Phi(\lambda) = \sum_{j=1}^n \mathbf{e}_j \Phi_j(\lambda), \]
where $\{\mathbf{e}_j\}_{j=1}^n$ are the usual basis vectors on $\mathbf{R}^n$. Since $\Phi_j \in PW^{\sigma_j}$ for $1\leq j \leq n$, we may approximate each by projecting onto the finite dimensional subspace consisting of the sample frequencies $\leq N$. So we write
\[ \Phi_j(\lambda) \approx \sum_{|J|\leq N} \Phi_j^J e_J^j(\lambda), \]
where $e_J^j(\lambda)$ is the $J$th basis function for $PW^{\sigma_j}$, given explicitly by
\[ e_J^j(\lambda) = \frac{ \sin (\sigma_j \lambda - \pi J)}{\sigma_j \lambda - \pi J}.  \]
The finite dimensional problems that approximate \eref{weak} are then
\[ \sum_{j=1}^n \sum_{|J|\leq N} \Phi_j^J a( \mathbf{e}_j\otimes e_J^j, \mathbf{e}_i \otimes e_I^i ) = \ell (\mathbf{e}_i \otimes e_I^i), \qquad 1\leq i \leq n,\,\, |I|\leq N. \]
This constitutes a $(2N+1)n$ by $(2N+1)n$ linear system for the $(2N+1)n$ \emph{complex} unknowns $\Phi_j^J$, however we are yet to take into account that $\Phi_j \in PW^{\sigma_j}_\mathrm{sym}$. We must enforce that $\Phi_j(\lambda)=\Phi^\star_j(\lambda)$, so that $\Phi_j$ is restricted to the real part of $PW^{\sigma_j}$. A straightforward computation reveals that this is equivalent to
\[ \Phi_j^{-J} = \overline{ \Phi_j^J}. \]
By writing $\Phi_j^J = X_j^J + \ii Y_j^J$ for real numbers $X_j^J$ and $Y_j^J$ and noting that the previous symmetry condition implies $Y_j^0=0$, we are left with a $(2N+1)n$ by $(2N+1)n$ linear system for the $(2N+1)n$ \emph{real} unknowns
\[ X_j^0, \quad X_j^J, \quad Y_j^J, \qquad 1\leq j\leq n, \quad 1\leq J \leq N. \]
The coefficients $\Phi_j^J$ are then built up from the facts
\[ Y^{-J}_j = -Y^J_j \quad \textrm{and} \quad X^{-J}_j = X^J_j .\]

\section{Conclusion}
We have shown that the global relation for Laplace's equation in a convex polygon gives rise to a well-defined spectral Dirichlet-Neumann map. This map is democratic, in the sense that the spectral boundary data $\Phi^\mathbf{t}$ and $\Phi^\mathbf{n}$ are treated at the same level (upto multiplication by $\ii$). The spectral Dirichlet-Neumann map describes a homeomorphism on $X_\mathrm{sym}$.

We treated the case in which the Dirichlet boundary data belonged to $H^1(\partial\Omega)$, but lower regularity can be assumed. If one deals with the more general Paley-Wiener spaces, consisting of entire functions of exponential type whose restriction to the real axis belongs to $L^p(\mathbf{R})$ for some $p\geq 1$, then more general results can be obtained. The relevant Paley-Wiener inequality in this case becomes $|f(z_0)|\lesssim e^{\sigma|z_0|} \| f\|_p$. It is most likely that these results will carry through into the limiting case where the Dirichlet boundary data belongs to $L^2(\partial\Omega)$. One would expect that the spectral Dirichlet-Neumann map can be extended to
\begin{eqnarray*} \fl &\mathsf{DN}: \mathcal{F} H^{s+1/2}[-\sigma_1,\sigma_1] \times \cdots \times \mathcal{F} H^{s+1/2}[-\sigma_n,\sigma_n] \\
&\qquad\quad\rightarrow \mathcal{F} H^{s-1/2}[-\sigma_1,\sigma_1] \times \cdots \times \mathcal{F} H^{s-1/2}[-\sigma_n,\sigma_n], \quad s\in \left[-\textstyle\frac{1}{2},\textstyle\frac{1}{2}\right] \end{eqnarray*}
so as to match the classical results for the Steklov-Poincare operator on Lipschitz domains \cite{mclean2000strongly}. Indeed, this is hinted at in the proof to Lemma \ref{uniquelem} -- uniqueness followed from the fact that $e^{\ii\lambda\sigma_i} = \mathcal{F} [ \delta_{-\sigma_i} ](\lambda)$ does not belong to $PW^{\sigma_i}=\mathcal{F}L^2[-\sigma_i,\sigma_i]$, i.e. $\delta_{-\sigma_i} \notin L^2=H^0$. However, $\delta_{-\sigma_i} \in H^{-s}$ for $s>1/2$. These issues are discussed in \cite{ashton2012distributions}.

The weak formulations given in \S\ref{numerical} give an infinite family of problems that can be attacked with standard numerical procedures. Each of these problems depends on a choice of contours $\{\gamma_i\}_{i=1}^n$. The constants involved in the boundedness and coercivity of the relevant bilinear form will depend on this choice of contours, so it is natural to ask which choice of contours is best. This and other aspects of the numerical implementation of our results are a work in progress.

We also provided a means, via \eref{ptwise}, to make the existing numerical implementations of the Fokas method mathematically rigorous.

The extension of these results to other constant coefficient elliptic boundary value problems is possible with suitable adjustments, but this will be pursued elsewhere. We expect that similar results will hold for the Helmholtz and modified Helmholtz equations, with the proofs following in a similar fashion. We also expect similar results to hold for the more important exterior problems. For an indication of the necessary modifications, we refer the reader to \cite{ashton2012distributions} where some of these modifications are presented.

Perhaps the most important thing to note is the possible extension of the methods produced here to higher dimensions. It was shown in \cite{ashton2012rigorous} that the global relation characterises the generalised Dirichlet-Neumann map for linear elliptic PDEs in convex domains in \emph{any} number of dimensions. However, doing meaningful analysis with the global relation in higher dimensions has proved difficult, with little progress made over the last fifteen years. The main arguments presented here \emph{can} be carried over to the higher dimensional problems. In particular, they can be used for boundary value problems in three dimensions. That this is possible is closely related to the fact that the theorems of Montel, Paley and Wiener (-Schwartz) extend to the complex analysis of several variables.

\ack
The results presented have benefited from useful discussions with Ralf Hiptmair (ETH Z\"{u}rich), particularly the material in \S 6. The author is indebted to Thanasis Fokas (DAMTP) for his constant support and encouragement.

\section*{References}

\end{document}